\documentclass[reqno,11pt]{amsart}
\usepackage[utf8]{inputenc}

\calclayout
\usepackage{amsthm,amsfonts,amstext,amssymb,mathrsfs,amsmath,latexsym,mathtools,mathabx}
\usepackage{enumerate}
\usepackage{bm,accents}
\usepackage[abs]{overpic}
\usepackage{mathrsfs}
\usepackage{hyperref}
\usepackage{esint} 
\usepackage{color}
\usepackage[colorinlistoftodos,prependcaption,textsize=small]{todonotes}
\usepackage{tikz}

\theoremstyle{plain}
\newtheorem{theorem}{Theorem}[section]
 
\newtheorem{lem}[theorem]{Lemma}
\newtheorem{corollary}[theorem]{Corollary} 

\theoremstyle{definition}

\theoremstyle{remark}
\newtheorem{remark}[theorem]{Remark}
\numberwithin{equation}{section}

\renewcommand{\Re}{\mathop{\rm Re}}
\renewcommand{\Im}{\mathop{\rm Im}}

\DeclareMathOperator{\supp}{supp}

\newcommand{\N}{\mathbb{N}}
\newcommand{\C}{\mathbb{C}}

\newcommand{\R}{\mathbb{R}}

\definecolor{apricot}{rgb}{0.98, 0.81, 0.69}

\definecolor{greenpie}{rgb}{0.69, 0.95, 0.76}

\title[Weighted equilibrium in a field of a uniform charge]{Weighted equilibrium in a field of a uniform charge of an interval}

\author[J.~Kessinger]{James Kessinger$^*$}
\thanks{$^*$ Corresponding author.}

\address[JK]{Department of Mathematics, Baylor University, Waco, TX 76706, USA}

\email{James\_Kessinger1@baylor.edu}

\author[A.~Mart\'{\i}nez-Finkelshtein]{Andrei Mart\'{\i}nez-Finkelshtein}

\address[AMF]{Department of Mathematics, Baylor University, Waco, TX 76706, USA, and Department of Mathematics, University of Almer\'{\i}a, Almer\'{\i}a, Spain}

\email{A\_Martinez-Finkelshtein@baylor.edu}

\date{\today}

\keywords{Equilibrium measure; External field; Logarithmic potentials; Cauchy transforms; Riemann surface; Integral representations}

\subjclass[2020]{Primary: 31A15; Secondary: 30C85, 30E15, 30E20, 30F10, 33E05, 42A50, 44A15}

\begin{document}

\begin{abstract}
    
We study the logarithmic equilibrium problem on the interval $[-1,1]$ in the presence of an external field generated by a uniform background charge supported on the same interval. For a real parameter $\tau$, the external field is taken to be $\tau$ times the logarithmic potential of the unit Lebesgue measure, and for all values of $\tau$ we determine explicitly the unique equilibrium measure $\mu_\tau$, its support, its Cauchy transform, its logarithmic potential (when a closed expression is available), and the equilibrium constant.

We show that the model exhibits three distinct regimes separated by critical values of $\tau$. For sufficiently negative $\tau$, the equilibrium support is a single symmetric subinterval strictly contained in $[-1,1]$. For an intermediate range of parameters, the support coincides with the full interval, and the equilibrium measure is an explicit linear combination of the Robin distribution and the Lebesgue measure. For large positive $\tau$, the support becomes disconnected and consists of two symmetric outer intervals. In each regime, we find the equilibrium measure, its Cauchy transform, its potential (when a closed expression is available), and the equilibrium constant, using complex-analytic methods and singular integral techniques. These results yield a complete picture of how the support topology and the equilibrium density/constant evolve as $\tau$ varies, including the transitions between one-cut, full-support, and two-cut configurations.
\end{abstract}

\maketitle

\section{Introduction} \label{sec:intro}

The classical equilibrium problem in potential theory asks for the probability measure on a compact set that minimizes the logarithmic energy. This fundamental problem has deep connections to approximation theory, random matrix theory, orthogonal polynomials, and other areas of mathematics and physics.

For a (signed) Borel measure $\sigma$ on $[-1,1]$, we denote by 
\begin{equation*}\label{Def_Log_Potential}
    V^\sigma(z):=\int\log\frac{1}{|x-z|}\, d\sigma(x)
\end{equation*}
its logarithmic potential. A well-known fact is that 
$$
d\eta(x):=\frac{1}{\pi \sqrt{1-x^2}}\, dx, \quad x\in (-1,1),
$$
is the minimizer, among all probability (unit and positive) measures on $[-1,1]$, of the energy
$$
\mathcal I[\sigma]:= \int V^\sigma \, d\sigma .
$$
The measure $\eta$, known as the equilibrium (or Robin) distribution of $[-1,1]$, is characterized by the property that its potential takes a constant value on the interval.

In the presence of an external field (or background potential) $\varphi:[-1,1]\to \R\cup \{\pm \infty\}$, the equilibrium measure $\mu$ is the probability measure on $[-1,1]$ minimizing the total energy
$$
\mathcal I_\varphi[\sigma]:= \int \left( V^\sigma +2\varphi  \right)\, d\sigma =\mathcal I[\sigma]+ 2 \int \varphi\, d\sigma
$$
among all probability  measures on $[-1,1]$, see e.g.~\cite{Saff:97}. If $\varphi$ is smooth on $(-1,1)$, then the measure $\mu$ is characterized by the following equilibrium conditions: there exists a constant $\omega\in \R$ such that
\begin{equation*}
   V^\mu(x)  +\varphi(x) \begin{cases}
       = \omega, & x\in \supp (\mu), \\
    \ge \omega, & x\in [-1,1].
   \end{cases}
\end{equation*}
The determination of $\supp (\mu)$, which may be a proper subset of  $[-1,1]$, is the central challenge.

In this paper, we consider the case where the external field $\varphi$ is itself a logarithmic potential, specifically, a multiple of the potential generated by the Lebesgue measure on the interval $[-1,1]$. To be more precise, let $\lambda$ be the absolutely continuous measure $\frac{1}{2}dx$ on $[-1,1]$. Given a parameter $\tau\in \R$, we are interested in the (unique) probability measure $\mu_\tau$ on $[-1,1]$ satisfying the equilibrium conditions
\begin{equation}
    \label{equilibrium}
   V^{\mu_\tau}(x)  +\tau V^{\lambda}(x) \begin{cases}
       = \omega_\tau, & x\in \supp (\mu_\tau), \\
    \ge \omega_\tau, & x\in [-1,1].
   \end{cases}
\end{equation}
This model has a natural physical interpretation: the equilibrium measure can be viewed as the distribution of charge that minimizes energy in the presence of a uniform background charge distributed along $[-1,1]$, where $\tau$ controls the strength and sign of the interaction.

Our main contribution is a complete description of $\mu_\tau$, its potential $V^{\mu_\tau}$ (when a closed analytic expression is available), and the equilibrium constant $\omega_\tau$ for all values of $\tau$. A central tool in our analysis is the Cauchy transform of $\mu_\tau$,
\begin{equation*}
    \label{CauchyTransf}
       C^{\mu_\tau}(z):=\int\frac{1}{z-x}\ d{\mu_\tau}(x),\quad z\in\mathbb{C}\setminus \supp({\mu_\tau}),
\end{equation*}
which encodes the equilibrium density via the Sokhotski–-Plemelj formula and is the primary object we determine in each regime.

We establish that there are three different regimes:
\begin{description}
    \item[Attractive regime ($\tau<-1$)] The support contracts to a symmetric proper subinterval,  $\supp \left(\mu_\tau\right)=\left[-\beta_\tau, \beta_\tau\right]$, where $0<\beta_\tau<1$ is explicitly determined by $\tau$. The measure is absolutely continuous with respect to Lebesgue measure, and its density involves inverse trigonometric functions. The external field is strictly convex, causing the equilibrium measure to concentrate away from the endpoints.
    
    \item[Intermediate regime ($-1\le \tau\le  2/(\pi-2)$)] The support is the full interval $\supp\left(\mu_\tau\right)= [-1,1]$, and the equilibrium measure has the remarkably simple form $\mu_\tau=(1+\tau) \eta-\tau \lambda$, an explicit linear combination of the Robin distribution and the Lebesgue measure. This regime includes both the classical case ($\tau=0$, where $\mu_0=\eta$) and cases with both attractive and repulsive components. 
    
    \item[Repulsive regime ($\tau>2/(\pi-2)$)] The support becomes disconnected: now $\supp (\mu_\mathbf{\tau})= [-1,-\beta_\tau] \cup [\beta _\tau, 1]$, where $\beta_\tau \in(0,1)$ is characterized as the solution of an equation involving the complete elliptic integral of the second kind. The external field is concave, pushing the equilibrium measure toward the endpoints of $[-1,1]$. 
\end{description}
The connectedness of the support for $\tau<0$ follows immediately from the convexity of the external field $\varphi$ (cf.~Figure~\ref{fig:ext_field}), a consequence of a general theorem in logarithmic potential theory (see e.g.~\cite[Theorem IV.1.10]{Saff:97}). The critical value $\tau=2 /(\pi-2) \approx 1.753$ marks the transition from connected to disconnected support.

\begin{figure}[h!]
    \centering
        \includegraphics[width=0.5\linewidth]{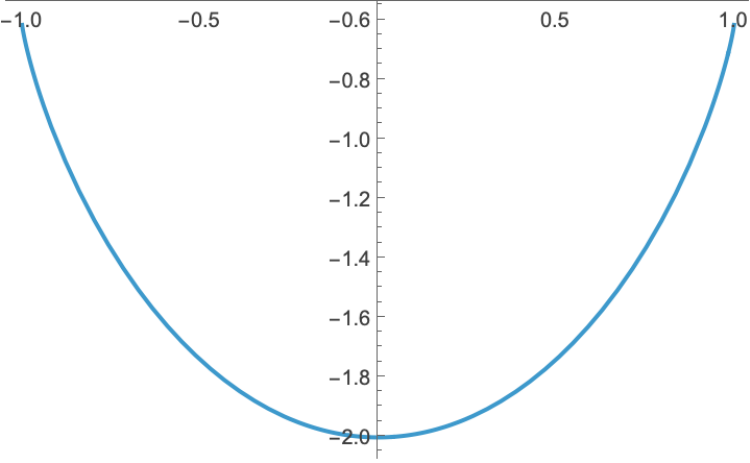}
        \caption{The external field  for $\tau=-2$.}
        \label{fig:ext_field}
\end{figure}

A primary motivation for this model comes from the theory of orthogonal polynomials. Consider polynomials $p_n(x)=\kappa_n x^n+\cdots$ orthogonal with respect to the varying weight $e^{-2 n \tau V^\lambda(x)} d x$ on $[-1,1]$, that is,
$$
\int_{-1}^1 p_n(x) p_m(x) e^{-2 n \tau V^\lambda(x)} d x=\delta_{n m}.
$$
By the general theory of weighted approximation (see e.g.~\cite{Saff:97}), the normalized zero-counting measures of $p_n$ converge in a weak-* sense to the equilibrium measure $\mu_\tau$ as $n \rightarrow \infty$. Thus, the three regimes identified in this paper correspond to qualitatively different asymptotic zero distributions: for $\tau<-1$ the zeros cluster on a proper subinterval and stay away from $\pm 1$; for intermediate $\tau$ they fill $[-1,1]$ with a density that interpolates between the arcsine law and the uniform distribution; and for $\tau>2 /(\pi-2)$ they split into two symmetric groups migrating toward the endpoints. This bifurcation of the support is a manifestation of a phase transition of the type well-studied in random matrix theory \cite{BleherIts, DeiftKriecherbauer}, where analogous transitions -- called a ``birth of a new cut'' or a ``closing of a gap" -- arise as a parameter in the matrix potential crosses a critical value and the limiting spectral measure changes topology. The critical value $\tau=2 /(\pi-2)$ is the precise threshold at which such a topological change occurs in our model. At the special value $\tau=0$ there is no external field, and the equilibrium measure is the classical arcsine distribution, recovering the zero distribution of Chebyshev polynomials; at $\tau=-1$ it reduces to the uniform measure $\lambda$ itself.

Our analysis relies on complex analytic techniques, including the study of Cauchy transforms, boundary value relations, and singular integral representations. In each regime, we derive explicit formulas or integral representations for the equilibrium density and provide closed-form expressions for the associated Cauchy transform. Together, these results yield a unified description of the equilibrium structure across the full parameter range and illustrate how external forcing induces topological changes in the support.

The paper is organized according to the three regimes. We first treat the full-support case (Section \ref{sec:case1}), where explicit formulas follow from linear combinations of known equilibrium measures. We then analyze the one-cut regime (Section \ref{sec:case2}) using analytic continuation and boundary value methods on a suitable Riemann surface. Finally, in Section \ref{sec:case3} we study the disconnected regime via singular integral techniques and elliptic-type representations. In this case, the equilibrium constant $ \omega_\tau$ can be 
obtained via a convergent series whose coefficients satisfy a three-term recurrence relation. 
Each section culminates in explicit expressions for the equilibrium measure and its analytic transforms.

\underline{Notational convention:} Along the paper, we denote by $\sqrt{\cdot}$ the positive square root of non-negative real values, while $\left(\cdot\right)^{1/2}$ stands for the principal branch of the complex square root, which coincides with $\sqrt{\cdot}$ for the positive argument. Furthermore, unless stated otherwise, by $\arg(z)$ we understand the principal branch of the function in $\C \setminus (-\infty, 0]$, such that $\arg(z)=0$ for $z>0$. In consequence, $\log(z)$ stands for the principal branch in $\C \setminus (-\infty, 0]$, $\log(z)=\ln|z|+i \arg(z)$.

\section{Case of the full support: \texorpdfstring{$-1\le \tau\le 2/(\pi-2)$}{-1\le \tau\le 2/(\pi-2)}} \label{sec:case1}

We begin with the case when $\supp(\mu_\tau)=[-1,1]$. Notice that for $\tau=0$, $\mu_0=\eta$, the Robin distribution, so that
$$
d\mu_0(x):=\frac{1}{\pi \sqrt{1-x^2}}\, dx, \quad x\in (-1,1);
$$
it is also known (and easy to compute) that in this situation,
$$
V^{\mu_0}(x) =\omega_0=  \log(2), \quad x\in [-1,1], 
$$
and
$$
V^{\mu_0}(z) = -\Re \log\left(\frac{\Phi(z)}{2} \right),
$$
where
\begin{equation}
    \label{defPhi}
    \Phi(z):= z+\left(z^2-1 \right)^{1/2}, \quad \Phi:\C\setminus [-1,1] \longrightarrow \left\{ w\in \C:\, |w|>1\right\},
\end{equation}
is the inverse of the Joukowski map.

\begin{theorem}\label{theorem_case_-1<=tau<=2/(pi-2)}
    For $-1\leq\tau\leq\frac{2}{\pi-2}$, $\mu_\tau$ is an absolutely continuous measure on $[-1,1]$ given by
    \begin{equation}\label{mu_when_-1<=tau_<=/(pi-2)}
        d\mu_\tau(x)=\left(\frac{1+\tau}{\pi\sqrt{1-x^2}}-\frac{\tau}{2}\right)\ dx,
    \end{equation}
so that
\begin{equation}\label{Cauchy_Transform_mu_when_-1<=tau_<=/(pi-2)}
        C^{\mu_\tau}(z)= \frac{1+\tau}{\left( z^2-1\right)^{1/2}}-\frac{\tau}{2}\log\left(\frac{z+1}{z-1}\right).
    \end{equation}
Furthermore, for $z\in \C\setminus [-1,1]$,  
\begin{equation*}
\begin{split}
        V^{\mu_\tau}(z)= & -(1+\tau)\log\left|\frac{z+\left( z^2-1\right)^{1/2}}{2}\right|-\tau  \\
        & -\frac{\tau}{2}\left(\operatorname{Re}(z)\log\left|\frac{z-1}{z+1}\right|+\operatorname{Im}(z)\arg\left(\frac{z+1}{z-1}\right)-\log|z^2-1|\right),
\end{split}
    \end{equation*}
and
\begin{equation}\label{Log_Potential_mu_when_-1<=tau_<=/(pi-2)_[-1,1]}
    V^{\mu_\tau}(x)=\frac{\tau}{2} \left[ (1+x)\log(1+x)+ (1-x)\log(1-x) -2 \right] + \omega_\tau, \quad x\in   [-1,1],
\end{equation}
where the equilibrium constant $\omega_\tau$ in \eqref{equilibrium} is  
\begin{equation}\label{equilibrium_constant_-1<=tau<=2/(pi-2)}
    \omega_\tau=(1+\tau)\log(2).
\end{equation}
\end{theorem}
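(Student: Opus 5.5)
The strategy is to guess the measure and verify it, using the uniqueness of the equilibrium measure. The candidate is the signed measure $\sigma_\tau:=(1+\tau)\eta-\tau\lambda$. Its total mass is $(1+\tau)-\tau=1$. By linearity of the potential,
$$V^{\sigma_\tau}+\tau V^\lambda=(1+\tau)V^\eta=(1+\tau)\log 2 \quad\text{on } [-1,1],$$
so the equality in \eqref{equilibrium} holds on all of $[-1,1]$ with $\omega_\tau=(1+\tau)\log 2$. Since the support is the full interval, the inequality part of \eqref{equilibrium} is vacuous.

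It therefore only remains to check that $\sigma_\tau$ is a positive measure, i.e.\ that its density
$$\frac{1+\tau}{\pi\sqrt{1-x^2}}-\frac{\tau}{2}$$
is nonnegative on $(-1,1)$.
\begin{itemize}
\item For $-1\le\tau\le 0$, both terms are nonnegative.
\item For $\tau>0$, the first term is minimized at $x=0$. The condition $(1+\tau)/\pi\ge\tau/2$ is equivalent to $2+2\tau\ge\pi\tau$, i.e.\ $\tau\le 2/(\pi-2)$.
\end{itemize}
Once positivity holds, $\sigma_\tau$ is a probability measure satisfying the equilibrium conditions. Uniqueness of the minimizer (the external field $\tau V^\lambda$ is continuous on $[-1,1]$, so the standard characterization from \cite{Saff:97} applies) gives $\mu_\tau=\sigma_\tau$ and \eqref{equilibrium_constant_-1<=tau<=2/(pi-2)}.

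The remaining formulas follow by linearity from the two building blocks $\eta$ and $\lambda$.
\begin{itemize}
\item \textbf{Cauchy transform.} We have $C^\eta(z)=(z^2-1)^{-1/2}$ and $C^\lambda(z)=\frac12\int_{-1}^1\frac{dx}{z-x}=\frac12\log\frac{z+1}{z-1}$, with the principal branch (real for $z>1$). Together these give \eqref{Cauchy_Transform_mu_when_-1<=tau_<=/(pi-2)}.
\item \textbf{Potential of $\lambda$ off the interval.} Use $V^\eta(z)=-\log|\Phi(z)/2|$ and compute $V^\lambda(z)=-\frac12\Re\int_{-1}^1\log(z-x)\,dx$ explicitly. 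An antiderivative gives
$$\int_{-1}^1\log(z-x)\,dx=(z+1)\log(z+1)-(z-1)\log(z-1)-2.$$
Taking real parts and regrouping yields $\Re z\log|(z+1)/(z-1)|+\log|z^2-1|$ plus the term $-\Im z\,\arg\frac{z+1}{z-1}$ (up to the sign conventions). One should check that this combination of logarithms, with principal branches, is analytic off $[-1,1]$, i.e.\ that the jumps across $(-\infty,-1)$ cancel. This is where the formula is written with $\arg\frac{z+1}{z-1}$.
\item \textbf{Potential on the interval.} Letting $z\to x\in[-1,1]$ gives $V^\lambda(x)=1-\frac12[(1+x)\log(1+x)+(1-x)\log(1-x)]$. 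Substituting into $V^{\mu_\tau}=\omega_\tau-\tau V^\lambda$ yields \eqref{Log_Potential_mu_when_-1<=tau_<=/(pi-2)_[-1,1]}.
\end{itemize}

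The main obstacle is not conceptual but bookkeeping: getting the signs and branches in the off-interval formula for $V^{\mu_\tau}(z)$ right. I would verify it by checking the asymptotics $V^{\mu_\tau}(z)=-\log|z|+o(1)$ as $z\to\infty$ and continuity up to $[-1,1]$.
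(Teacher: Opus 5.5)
Your proposal is correct and follows essentially the same route as the paper. Both write $\mu_\tau=(1+\tau)\eta-\tau\lambda$, check unit mass and positivity (minimum of the density at $x=0$, giving $\tau\le 2/(\pi-2)$), and use linearity to get $V^{\mu_\tau}+\tau V^\lambda=(1+\tau)\log 2$ on all of $[-1,1]$. Both then read off the Cauchy transform and potentials from the explicit $C^\lambda$, its primitive $g^\lambda$, and $V^\eta$; your sign bookkeeping for the off-interval formula, with $\arg\frac{z+1}{z-1}=\arg(z+1)-\arg(z-1)$ off the real axis, reproduces the stated expression.
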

\begin{proof}
    The measure $\mu_\tau$ in \eqref{mu_when_-1<=tau_<=/(pi-2)} is \begin{equation*}\label{Form_of_mu_when_-1<=tau_<=/(pi-2)}
        \mu_\tau=(1+\tau)\eta-\tau\lambda.
    \end{equation*}
    Since $\eta$ and $\lambda$ are probability measures, it follows that \eqref{mu_when_-1<=tau_<=/(pi-2)} is a unit measure. By assumption, $\tau+1>0$, so that the minimum of the density in the right-hand side of \eqref{mu_when_-1<=tau_<=/(pi-2)}, attained for $x=0$, is
    $$
   \frac{1+\tau}{\pi }-\frac{\tau}{2},
    $$
    which is nonnegative due to the upper bound on $\tau$. In consequence, \eqref{mu_when_-1<=tau_<=/(pi-2)} is a positive (and hence a probability) measure on $[-1,1]$.

    Furthermore, for $x\in[-1,1]$,  
    \begin{align*}
      V^{\mu_\tau}(x)+\varphi(x)=   V^{(1+\tau)\eta-\tau\lambda}(x)+\tau V^{\lambda}(x)=(1+\tau)V^\eta(x)=(1+\tau)\log(2).
    \end{align*}
    This proves that \eqref{mu_when_-1<=tau_<=/(pi-2)} is the equilibrium measure and gives us the expression in \eqref{equilibrium_constant_-1<=tau<=2/(pi-2)}. 
    
    Finally, \eqref{Cauchy_Transform_mu_when_-1<=tau_<=/(pi-2)}--\eqref{Log_Potential_mu_when_-1<=tau_<=/(pi-2)_[-1,1]} follow from the known explicit expressions for the Cauchy transform for $\lambda$,
    \begin{equation*} \label{CauchyforLebesgue}
        C^{\lambda}(z)= \frac{1}{2}\log\left(\frac{z+1}{z-1}\right)=\frac{1}{2}\log\left(1+\frac{2}{z-1}\right),
    \end{equation*}
    its primitive,
    \begin{equation} \label{GforLebesgue}
    \begin{split}
        g^{\lambda}(z)&=\int^z C^{\lambda}(y)\, dy= \frac{1}{2} (z+1) \log (z+1)-\frac{1}{2} (z-1) \log (z-1)-1 \\
        &= z \, C^{\lambda}(z)+\frac{1}{2}  \log (z^2-1) -1,
         \end{split}
    \end{equation}
    normalized to have the expansion
    $$
    g^{\lambda}(z)=\log(z)+\mathcal O(z^{-1}), \quad z\to\infty, 
    $$
    and the logarithmic potential of $\lambda$,
    $$
    V^{\lambda}(z)=-\Re g^{\lambda}(z),
    $$
    as well as of the analogous functions of $\eta$.
\end{proof}

\section{Connected support (one-cut case): \texorpdfstring{$\tau<-1$}{\tau<-1}} \label{sec:case2}

\begin{theorem}\label{theorem_case_tau<-1}
    For $\tau<-1$, $\mu_\tau$ is an absolutely continuous measure with 
    \begin{equation}\label{supp_mu_tau<-1}
        \supp(\mu_\tau)=\left[-\beta_\tau,\beta_\tau\right], \quad \beta_\tau= \sqrt{1-\left(\frac{1+\tau}{\tau}\right)^2 }\in (0,1),
    \end{equation}
    given by 
    \begin{equation}\label{mu_tau<-1_expression_1}
        d\mu_\tau(x)=-\frac{\tau}{\pi}\left(\frac{\pi}{2}-\arctan \left( \sqrt{\frac{1-\beta_\tau^2}{\beta_\tau^2-x^2}}\right)\right)\, dx,
    \end{equation}
so that,
\begin{equation}\label{Cauchy_Transform_tau<-1_expression_2}
        C^{\mu_\tau}(z)
        = \tau\log\left(\frac{(z^2-\beta_\tau^2)^{1/2}+\sqrt{1-\beta_\tau^2}}{z+1}\right), \quad z\in \C\setminus\left[-\beta_\tau,\beta_\tau\right].
    \end{equation}
    Furthermore, for $z\in \C\setminus [-\beta_\tau,\beta_\tau]$,  
    $$
     V^{\mu_\tau}(z)=- \Re g^{\mu_\tau}(z), 
    $$
    where
    \begin{equation}\label{g_function_tau<-1}
    \begin{split}
        g^{\mu_\tau}(z)&=zC^{\mu_\tau}(z)+(1+\tau)\log\left(\frac{\beta_\tau}{2}\Phi\left(\frac{z}{\beta_\tau}\right)\right)-\tau\log\left(\frac{(z^2-\beta_\tau^2)^{1/2}+z\sqrt{1-\beta_\tau^2}}{1+\sqrt{1-\beta_\tau^2}}\right)-1
    \end{split}
    \end{equation}
    and $\Phi$ is defined as in \eqref{defPhi}. In particular, for $z\in\mathbb{C}\setminus[-\beta_\tau,\beta_\tau]$,
    \begin{equation}\label{log_potential_tau<-1}
    \begin{split}
        V^{\mu_\tau}(z)&=-\operatorname{Re}(zC^{\mu_\tau}(z))-(1+\tau)\log\left|\frac{\beta_\tau}{2}\Phi\left(\frac{z}{\beta_\tau}\right)\right|+\tau\log\left|\frac{(z^2-\beta_\tau^2)^{1/2}+z\sqrt{1-\beta_\tau^2}}{1+\sqrt{1-\beta_\tau^2}}\right|+1.
    \end{split}
    \end{equation}
   On the support $\supp(\mu_\tau)=\left[-\beta_\tau,\beta_\tau\right]$, $V^{\mu_\tau}$ is given by the same expression \eqref{Log_Potential_mu_when_-1<=tau_<=/(pi-2)_[-1,1]}, but now the equilibrium constant $\omega_\tau$ is  
\begin{equation}\label{equilibrium_constant_tau<-1}
        \omega_\tau=(1+\tau)\log(2)-\log(\beta_\tau)+1+\tau-\tau\log\left(1+\sqrt{1-\beta_\tau^2}\right).
    \end{equation}
\end{theorem}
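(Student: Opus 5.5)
The approach is to construct a candidate $\mu_\tau$ explicitly through its Cauchy transform and then verify the equilibrium conditions \eqref{equilibrium} directly; uniqueness of the equilibrium measure then identifies it with the true one. For $\tau<-1$ the field $\varphi=\tau V^\lambda$ is convex, so by \cite[Theorem IV.1.10]{Saff:97} the support is an interval. By the symmetry $x\mapsto -x$ it has the form $[-\beta,\beta]$. On the support, differentiating the equality in \eqref{equilibrium} gives the singular integral equation
\[
\mathrm{p.v.}\int\frac{d\mu(t)}{x-t}=-\tau\,\frac{d}{dx}V^\lambda(x)\cdot(-1)=\frac{\tau}{2}\log\frac{1+x}{1-x},\qquad x\in(-\beta,\beta).
\]
Equivalently, $C_+ + C_- = \tau\log\frac{1+x}{1-x}$ there.

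I look for $C=C^{\mu}$ analytic off $[-\beta,\beta]$ with $C(z)\sim 1/z$ at infinity, and I set $R(z)=(z^2-\beta^2)^{1/2}$. The natural ansatz is \eqref{Cauchy_Transform_tau<-1_expression_2}, namely $C(z)=\tau\log\bigl((R(z)+s)/(z+1)\bigr)$ with $s=\sqrt{1-\beta^2}$.

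The first check is analyticity. The function $(R+s)(R-s)=z^2-1$, so $R(z)+s$ vanishes only where $R=-s$, which is impossible on the principal sheet. The quotient $(R+s)/(z+1)$ has a removable behaviour at $z=-1$; there $R(-1)=-s$, giving a $0/0$ that must be checked. Because of this, I will instead write
\[
C=\frac{\tau}{2}\Bigl[\log\frac{R+s}{R-s}-\log\frac{z+1}{z-1}\Bigr],
\]
and verify that the argument stays off the branch cut.

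The second check is the jump condition. On the cut $R_\pm=\pm i\sqrt{\beta^2-x^2}$, and
\[
\log(R_++s)+\log(R_-+s)=\log|R+s|^2=\log(1-x^2)\ \text{ up to branch bookkeeping},
\]
so that $C_++C_-=\tau\log\frac{1-x^2}{(x+1)^2}$. This is exactly $\tau\log\frac{1-x}{1+x}$. Its sign has to be matched against the equation above; I expect a sign adjustment in how the logarithm is arranged, and the precise branch choice is where care is needed.

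The third check is the normalization. As $z\to\infty$,
\[
\log\frac{z+s+O(1/z)}{z+1}=(s-1)/z+O(z^{-2}).
\]
Hence $\tau(s-1)=1$, that is $s=(1+\tau)/\tau$, which yields \eqref{supp_mu_tau<-1}. The condition $\tau<-1$ guarantees $0<s<1$.

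The density then follows from Sokhotski–Plemelj, $-\frac1\pi\Im C_+$. Using $\arg(s+i\sqrt{\beta^2-x^2})=\arctan(\sqrt{\beta^2-x^2}/s)=\pi/2-\arctan(s/\sqrt{\beta^2-x^2})$, this gives \eqref{mu_tau<-1_expression_1}. Positivity is clear since $-\tau>0$ and the bracket is positive, and the density vanishes like a square root at $\pm\beta$.

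For the potential, I define $g^{\mu}$ as the primitive of $C$ normalized so that $g^\mu=\log z+o(1)$, and I verify \eqref{g_function_tau<-1}. I will differentiate the proposed right-hand side. Integration by parts gives $g=zC-\int zC'$, and $zC'$ should be expressible through $\frac{d}{dz}\log\Phi(z/\beta)=1/R$ and the derivative of $\log(R+zs)$. The constant comes from matching the expansion at infinity, where $\frac{\beta}{2}\Phi(z/\beta)\sim z$ and $(R+zs)/(1+s)\sim z$. The $-1$ term comes from $zC\to\tau(s-1)=1$. Then $V^\mu=-\Re g$.

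On $[-\beta,\beta]$, since $C_++C_-$ equals $\tau$ times the field derivative, $V^\mu+\tau V^\lambda$ is constant. Since $V^\lambda(x)=-\frac12[(1+x)\log(1+x)+(1-x)\log(1-x)]+1$ by \eqref{GforLebesgue}, this gives the stated form of $V^\mu$. The constant $\omega_\tau$ is obtained by evaluating $-\Re g$ at $x=\beta$, where $R=0$ and $\Phi(1)=1$. This yields
\[
-\Re(\beta C(\beta))-(1+\tau)\log(\beta/2)+\tau\log\frac{\beta s}{1+s}+1,
\]
with $\beta C(\beta)=\tau\beta\log\frac{s}{1+\beta}$, and adding $\tau V^\lambda(\beta)$ should simplify, using $1-\beta^2=s^2$, to \eqref{equilibrium_constant_tau<-1}.

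The main obstacle is the inequality in \eqref{equilibrium} on $[\beta,1]$. I handle it by computing, for $x\in(\beta,1)$, the derivative
\[
\frac{d}{dx}\bigl(V^\mu+\tau V^\lambda\bigr)=-C(x)+\frac{\tau}{2}\log\frac{1+x}{1-x}
=-\frac{\tau}{2}\log\frac{R+s}{R-s}.
\]
Here $R=\sqrt{x^2-\beta^2}>0$ and $R<s$ because $x<1$, so $R-s<0$. The sign therefore needs checking via the real-valued form: $|R+s|/|R-s|>1$, so the logarithm is positive and its product with $-\tau/2>0$ is positive. Thus the total potential increases away from $\beta$, which gives the inequality, and by symmetry the same holds on $[-1,-\beta]$. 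Getting these branch and sign conventions consistent, both here and in the jump computation above, is the delicate point of the whole argument.
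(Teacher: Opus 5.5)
Your verify-and-invoke-uniqueness route works. The sign mismatch you flag, however, is not a branch issue. It comes from your target equation, and your ansatz needs no adjustment. From your own formula $V^\lambda(x)=1-\frac12[(1+x)\log(1+x)+(1-x)\log(1-x)]$ one gets $\frac{d}{dx}V^\lambda(x)=-\frac12\log\frac{1+x}{1-x}$. The equality in \eqref{equilibrium} therefore differentiates to
\[
\mathrm{p.v.}\int\frac{d\mu(t)}{x-t}=-\frac{\tau}{2}\log\frac{1+x}{1-x},\qquad\text{i.e.}\qquad C_+(x)+C_-(x)=\tau\log\frac{1-x}{1+x},\quad x\in(-\beta,\beta).
\]
This is exactly what you computed. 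There is no branch ambiguity, since $s\pm i\sqrt{\beta^2-x^2}$ lies in the right half-plane and $1+x>0$. Changing the sign of the ansatz would in fact be fatal: $C=-\tau\log\frac{R+s}{z+1}$ would force $\tau(1-s)=1$, i.e.\ $s=1-1/\tau>1$.

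The same slip appears in your exterior computation. On $(\beta,1)$ the derivative of the total potential is $-C(x)-\frac{\tau}{2}\log\frac{1+x}{1-x}$. It is this expression that equals $-\frac{\tau}{2}\log\frac{s+R}{s-R}>0$ (use $1-x^2=(s-R)(s+R)$); with your $+$ sign the claimed identity is false.

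Also, $R+s$ does vanish on the sheet where $R\sim z$, namely at $z=-1$. The identity $\frac{R+s}{z+1}=\frac{z-1}{R-s}$ shows the quotient is analytic and zero-free on $\overline{\C}\setminus[-\beta,\beta]$ and equals $1$ at $\infty$. Hence a single-valued logarithm with $C(\infty)=0$ exists.

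With these corrections your argument is complete, and it differs genuinely from the paper's. The paper derives the formula rather than taking it as given:
\begin{itemize}
\item it lifts $f=C^{\mu_\tau}+\tau C^\lambda$ to the two-sheeted genus-zero surface and uniformizes by $\phi(\zeta)=\Phi(\zeta/\beta_\tau)$;
\item it reconstructs $F=\mathcal F\circ\phi^{-1}$ from its constant jump $-\tau\pi i$ on $\phi(\ell)$ by Plemelj, and fixes $\beta_\tau$ from the behaviour at infinity;
\item it checks the inequality through Blaschke factors and evaluates $\omega_\tau$ at $x=0$.
\end{itemize}

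Your route takes the Cauchy transform from the statement and only verifies. The equality on the support is the one-line computation $|s+i\sqrt{\beta^2-x^2}|^2=1-x^2$, and the inequality is simply $|R+s|>|R-s|$ for $0<R<s$. Because the equilibrium conditions characterize $\mu_\tau$, the convexity theorem serves only as motivation. The paper's approach explains where the formula comes from; yours is shorter and more elementary.

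Evaluating at $x=\beta_\tau$ instead of $0$ also works: the $\log(1\pm\beta_\tau)$ terms cancel exactly and give \eqref{equilibrium_constant_tau<-1}. When you differentiate \eqref{g_function_tau<-1}, note that $g'=C$ holds only after substituting $1+\tau=\tau s$. The cancellations then follow from $(R+s)(R-s)=z^2-1$ and $(R+zs)(R-zs)=\beta^2(z^2-1)$.
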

For the plot of the density of the equilibrium measure $\mu_\tau$ for $\tau=-2$, see   Figure~\ref{fig:density}.

\begin{figure}[h!]
    \centering
        \includegraphics[width=0.5\linewidth]{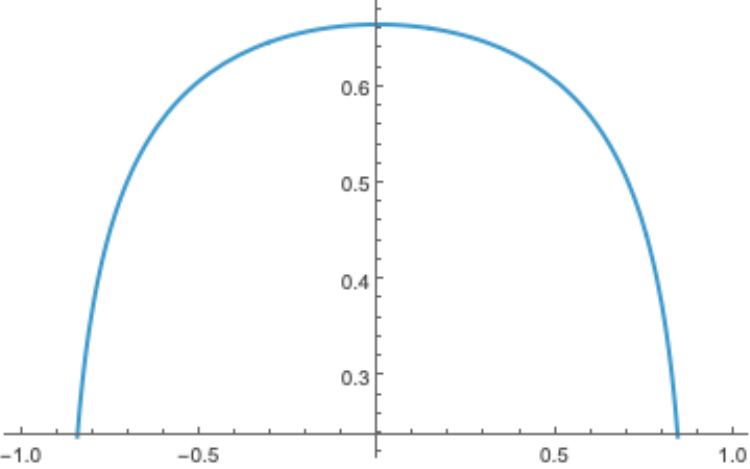}
        \caption{The density $\mu_{\tau}'$ for $\tau=-2$, with $\beta_{-2}=\sqrt{3}/2$.}
        \label{fig:density}
\end{figure}
\begin{remark}
\begin{enumerate} 
    \item[\textit{(i)}] Comparing the expressions \eqref{equilibrium_constant_-1<=tau<=2/(pi-2)} and \eqref{equilibrium_constant_tau<-1} we see that the equilibrium constant $\omega_\tau$ is continuous across $\tau=-1$:
$$
\lim_{\tau \to -1^-} \omega_\tau = \lim_{\tau \to -1^+} \omega_\tau=0.
$$
This is consistent with the fact that
$$
\mu_\tau \big|_{\tau=-1^-}=\mu_\tau \big|_{\tau=-1^+}=\lambda. 
$$
\item[\textit{(ii)}]    The density $\mu_\tau'$ exhibits the square-root asymptotics at the end-points of the support, corresponding to the standard ``soft edge'' behavior:
    \[\mu_\tau'(x)=\frac{\tau}{\pi}\sum_{n=1}^\infty\frac{(-1)^{n+1}}{n}\left(\frac{\beta_\tau^2-x^2}{1-\beta_\tau^2}\right)^{\frac{2n-1}{2}}=\frac{\tau}{\pi}\sqrt{\frac{\beta_\tau^2-x^2}{1-\beta_\tau^2}}\left(1+\mathcal{O}(|x\mp\beta_\tau|)\right),\quad x\to\pm\beta_\tau.\]
    \end{enumerate}
\end{remark}
\begin{proof}
    For $\tau<0$, $\tau V^\lambda$ is downward-convex on $[-1,1]$, see Figure \ref{fig:ext_field}, and by \cite[Theorem IV.1.10, (b)]{Saff:97},  $\supp(\mu_\tau)$ is a sub-interval on $[-1,1]$. Since $\varphi$ is an even function, $\supp(\mu_\tau)$ is symmetric with respect to the origin, so that $\supp(\mu_\tau)=[-\beta_\tau,\beta_\tau]$, where the value of $\beta_\tau\in[0,1]$ is to be determined. 
    
    Function $f:\mathbb{C}\setminus[-1,1]\to\mathbb{C}$ defined by 
    $$
    f(z):=C^{\mu_\tau}(z)+\tau C^\lambda(z)=\int\frac{ d{\mu_\tau}(x)}{z-x}+ \tau \int\frac{ d{\lambda}(x)}{z-x}
    $$
    is holomorphic (analytic and single-valued) on its domain, and   
\begin{equation}\label{f_tau<-1_infinity_property}
        f(z)=\frac{1+\tau}{z}+\mathcal O(z^{-2}), \quad z\to\infty.
    \end{equation}
Since by definition, $f$ is real-valued on $\R\setminus [-1,1]$, the symmetry principle implies that
\begin{equation}
    \label{symmetryprinciple}
    f(\overline{z}) =\overline{f(z)}, \quad z\in \C\setminus [-1,1].
\end{equation}
Our goal is to obtain an explicit expression for $f$ using the properties enumerated next, and in this way, to recover $\mu_\tau$. 
    
By \cite[Chapter IV, in particular, Theorems IV.2.2 and IV.3.1]{Saff:97}, the density of $\mu_\tau$ is locally Lipschitz-continuous on its support, and thus, $f$ has continuous non-tangential boundary values $f_\pm$ from the upper/lower half-plane on $(-1,1)$. Since $C^{\mu_\tau}$ is analytic in $\C \setminus [-\beta_\tau,\beta_\tau]$, by Sokhotski–-Plemelj’s formula,   
    \begin{equation}
        f_+(x)-f_-(x)=-\tau\pi i,  \quad  x\in(-1,-\beta_\tau)\cup(\beta_\tau,1).
        \label{f_tau<-1_limit_subtract_property}
    \end{equation}
 
   The total potential 
    \begin{equation}
        \label{totalpotential}
    V^{\mu_\tau}(x)+\tau V^\lambda(x)=-\Re \int^z f(s) ds
     \end{equation}
    is continuous on $(-1,1)$ (see \cite[Section IV.2]{Saff:97}), and equilibrium conditions \eqref{equilibrium} imply that it
    is constant on $[-\beta_\tau,\beta_\tau]$. Combining it with \eqref{symmetryprinciple}, we conclude that 
    \begin{equation}
        f_+(x)+f_-(x)=2\Re f_\pm(x)=0, \quad  x\in(-\beta_\tau,\beta_\tau).
        \label{f_tau<-1_limit_sum_property}
    \end{equation}
 Motivated by the inequalities in \eqref{equilibrium}, we make an ansatz that the total potential is strictly increasing (respectively, decreasing) on $(\beta_\tau,1)$ (respectively, on $(-1,-\beta_\tau)$). By formula \eqref{totalpotential}, this means that
    \begin{equation}
        \Re f_\pm(x)\geq 0 \text{ on } (-1,-\beta_\tau), \quad  \text{and} \quad \Re f_\pm(x)\leq0 \text{ on } (\beta_\tau,1).
        \label{f_tau<-1_Re_f_sign_property}
    \end{equation}
    
To find an explicit expression for $f$, we create a $2$-sheeted Riemann surface $\mathcal{R}$ of genus 0 by taking two copies ($\mathcal{R}_1$ and $\mathcal{R}_2$) of the Riemann sphere cut along $[-\beta_\tau,\beta_\tau]$ and gluing them along the cut with opposite orientation, see Figure \ref{fig:Riemann_Surface}. We will use $\zeta$ for the local coordinate on $\mathcal R$. Denote by $L=[-1,1]\backslash[-\beta_\tau,\beta_\tau]$, and by $\ell_1\subset \mathcal{R}_1$ and $\ell_2 \subset\mathcal{R}_2$, the lift of this set to the sheets $\mathcal R_1$ and $\mathcal R_2$, respectively, with $\ell:=\ell_1 \cup \ell_2$.

    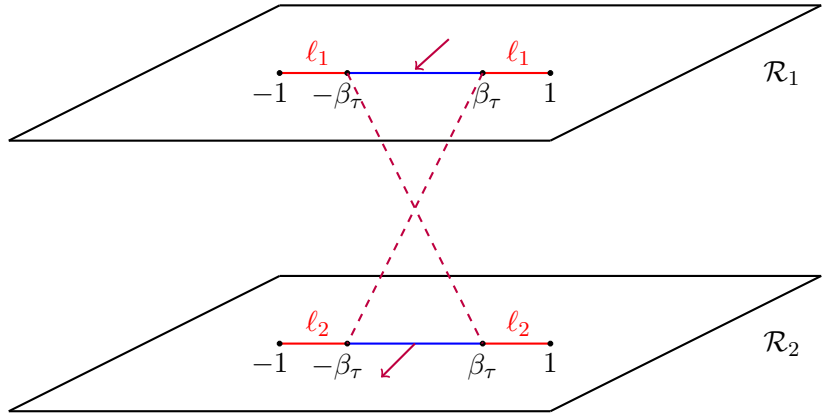
\begin{figure}[ht]
        \centering
        \begin{tikzpicture}[scale=0.9]
        \draw[black, thick] (-2,8) -- (6,8);
        \draw[black, thick] (-6,6) -- (2,6);
        \draw[black, thick] (-2,8) -- (-6,6);
        \draw[black, thick] (2,6) -- (6,8);

        \draw[black, thick] (-2,4) -- (6,4);
        \draw[black, thick] (-6,2) -- (2,2);
        \draw[black, thick] (-2,4) -- (-6,2);
        \draw[black, thick] (2,2) -- (6,4);

        \draw[blue, thick] (-1,7) -- (1,7);
        \draw[red, thick] (-2,7) -- (-1,7);
        \draw[red, thick] (1,7) -- (2,7);

        \draw[blue, thick] (-1,3) -- (1,3);
        \draw[red, thick] (-2,3) -- (-1,3);
        \draw[red, thick] (1,3) -- (2,3);

        \filldraw[black] (-2.,7) circle (1pt) node[anchor=north]{$-1\ \ $};
        \filldraw[black] (-1,7) circle (1pt) node[anchor=north]{$-\beta_\tau\ \ $};
        \filldraw[black] (1,7) circle (1pt) node[anchor=north]{$\ \beta_\tau$};
        \filldraw[black] (2,7) circle (1pt) node[anchor=north]{$1$};
        \filldraw[red] (-1.1,7.3) circle (0pt) node[anchor=east]{$\ell_1$};
        \filldraw[red] (1.2,7.3) circle (0pt) node[anchor=west]{$\ell_1$};

        \filldraw[black] (-2.,3) circle (1pt) node[anchor=north]{$-1\ \ $};
        \filldraw[black] (-1,3) circle (1pt) node[anchor=north]{$-\beta_\tau\ \ $};
        \filldraw[black] (1,3) circle (1pt) node[anchor=north]{$\beta_\tau$};
        \filldraw[black] (2,3) circle (1pt) node[anchor=north]{$1$};
        \filldraw[red] (-1.1,3.3) circle (0pt) node[anchor=east]{$\ell_2$};
        \filldraw[red] (1.2,3.3) circle (0pt) node[anchor=west]{$\ell_2$};

        \draw[purple, dashed, thick] (-1,7) -- (1,3);
        \draw[purple, dashed, thick] (1,7) -- (-1,3);
        \draw[purple, thick, ->] (0.5,7.5) -- (0,7.05);
        \draw[purple, thick, ->] (0,3) -- (-0.5,2.5);

        \filldraw[black] (5,7) circle (0pt) node[anchor=west]{$\mathcal{R}_1$};
        \filldraw[black] (5,3) circle (0pt) node[anchor=west]{$\mathcal{R}_2$};
    \end{tikzpicture}
        \caption{The Riemann surface $\mathcal{R}.$}
        \label{fig:Riemann_Surface}
    \end{figure}
    
    We define on $\mathcal R\setminus \ell$ the complex-valued function
    \begin{equation*}\label{F_tau<-1}
      \mathcal  F(\zeta)=\begin{cases}
            f(\zeta), &  \zeta\in\mathcal{R}_1 \setminus \ell_1, \\
                 -f(\zeta), &   \zeta\in\mathcal{R}_2 \setminus \ell_2.
              \end{cases}
    \end{equation*}
    Condition \eqref{f_tau<-1_limit_sum_property} guarantees that $\mathcal F$ is also analytic across $(-\beta_\tau,\beta_\tau)$; thus, it is analytic on $\mathcal{R}\backslash \ell$. Furthermore, condition \eqref{f_tau<-1_limit_subtract_property} lifts to $\mathcal R$ as 
    \begin{equation}
        \label{bdrycondonR}
    (\mathcal F_+-\mathcal F_-)(\zeta)=-\tau\pi i, \quad \zeta\in \ell_1, \qquad  (\mathcal F_+-\mathcal F_-)(\zeta)=\tau\pi i, \quad \zeta\in \ell_2.
    \end{equation}

   With the notation \eqref{defPhi}, define
    \begin{equation*}\label{varphi}
         \phi(\zeta):=\begin{cases}
             \Phi(\zeta/\beta_\tau), & \zeta\in \mathcal R_1, \\
            1/ \Phi(\zeta/\beta_\tau), & \zeta\in \mathcal R_2.
         \end{cases}
    \end{equation*}
    Since $\Phi_-(x)\Phi_+(x)=1$ on $(-1,1)$, we see that $\phi$ is globally defined and holomorphic on the whole Riemann surface $\mathcal R$. The transformation $z=\phi(\zeta)$ is a conformal bijection between  $\mathcal R$ and $\overline{\C}$, in such a way that the image of $\mathcal R_1$ (respectively, $\mathcal R_2$) is the exterior (respectively, interior) of the unit circle.
     It is convenient to observe that
     \begin{equation}
         \label{imagebyPhi}
         \phi(\ell) = \left[-A,-\frac{1}{A}\right]\cup \left[\frac{1}{A},A\right], \quad   A=A(\tau):=\Phi(1/\beta_\tau)=\frac{1}{\beta_\tau}+\frac{1}{\beta_\tau}\sqrt{1-\beta_\tau^2}\geq1. 
     \end{equation}
    
    The inverse $\phi^{-1}$ of $\phi $ is a rational function with a single pole at the origin:
    \begin{equation*}\label{varphi_inverse}
        \zeta=\phi^{-1}(z)=\frac{\beta_\tau}{2}\left(z+\frac{1}{z}\right).
    \end{equation*}
   We define $ F$ on $\mathbb{C}$ by 
    \begin{equation*}\label{F_Tilde_tau<-1}
        F(z)=\left( \mathcal F \circ \phi^{-1}\right) (z),\quad z\in\mathbb{C}.
    \end{equation*}
Taking into account \eqref{f_tau<-1_infinity_property} and that
    $$
    \phi^{-1} (z)= \frac{\beta_\tau}{2}z\left(1+o(1)\right), \quad z\to \infty; \qquad  \phi^{-1} (z)= \frac{\beta_\tau}{2z} \left(1+o(1)\right), \quad z\to 0, 
    $$
    we have that 
    \[
    F(z)
    =\frac{1+\tau}{z}\cdot\frac{2}{\beta_\tau}\left( 1+\mathcal O(z^{-1})\right), \quad z\to \infty, \]
    and
    \[
    F(z)
     = -(1+\tau)\cdot\frac{2z}{\beta_\tau}\left( 1+\mathcal O(z)\right),  \quad z \to 0.
     \]
    Thus, ${F}$ is analytic on $\mathbb{C}\setminus\phi(\ell)
    $, 
    and, by \eqref{bdrycondonR} and the definition of $F$,
    $$
    F_+(z)-F_-(z)=-\tau\pi i, \quad z\in \phi(\ell).
    $$
    Hence, we can recover $F$ from its boundary conditions using Sokhotski–-Plemelj's formula: 
\begin{equation*}\label{tau<-1_work_on_F_Tilde_Expression_1}
        \begin{split}
           F(z)&=\frac{1}{2\pi i}\int_{\phi(\ell)}\frac{\left( {F}_+- {F}_-\right)(t)}{t-z}\ dt =-\frac{\tau}{2 }\int_{\phi(\ell)}\frac{1}{t-z}\, dt. 
        \end{split}
    \end{equation*}
We can integrate it explicitly and select the appropriate branch of the logarithm by observing that $F(x)<0$ for $x>A$, with $A$ given in \eqref{imagebyPhi}. Thus,
$$
F(z)=-\frac{\tau}{2}\log\left(\frac{(z+1/A)(z-A)}{(z-1/A)(z+A)} \right).
$$
It is convenient to rewrite $F$ by introducing for $0< |a|\le 1$ the Blaschke factor
$$
b (z;a):=\frac{\left|a \right|}{a } \frac{a -z}{1-\bar{a}  z},
$$
so that
$$
F(z)=-\frac{\tau}{2}\log\left(\frac{b(z;-1/A) }{b(z;1/A)} \right).
$$
By definition,
$$
\mathcal F(\zeta)={F}(\phi(\zeta))=-\frac{\tau}{2}\log\left(\frac{b\left(\phi(\zeta);-1/A\right) }{b\left(\phi(\zeta);1/A\right)} \right),  \quad \zeta \in \mathcal R \setminus \ell. 
$$
Specializing $\zeta$ to $\mathcal R_1$ and using that $\mathcal F=f$ there, we get that
  \begin{equation}
  \label{f_tau<-1_expression_B}
  \begin{split}
 f(z)&= -\frac{\tau}{2}\log\left(\frac{(\Phi(z/\beta_\tau)+1/\Phi(1/\beta_\tau))(\Phi(z/\beta_\tau)-\Phi(1/\beta_\tau))}{(\Phi(z/\beta_\tau)-1/\Phi(1/\beta_\tau))(\Phi(z/\beta_\tau)+\Phi(1/\beta_\tau))} \right)
 \\
 &= -\frac{\tau}{2}\log\left(\frac{b\left(\Phi(z/\beta_\tau);-1/\Phi(1/\beta_\tau)\right) }{b\left(\Phi(z/\beta_\tau);1/\Phi(1/\beta_\tau)\right)} \right).
  \end{split}
\end{equation}
Using the expression for $A$ in \eqref{imagebyPhi}, we can write it explicitly in terms of $\beta_\tau$:
    \begin{equation}\label{f_tau<-1_expression_2}
        f(z)=\frac{\tau}{2}\log\left(  \frac{(z^2-\beta_\tau^2)^{1/2}+ \sqrt{1-\beta_\tau^2}}{(z^2-\beta_\tau^2)^{1/2}- \sqrt{1-\beta_\tau^2}}\right)= \frac{\tau}{2}\log\left(1+ \frac{2\sqrt{1-\beta_\tau^2}}{(z^2-\beta_\tau^2)^{1/2}- \sqrt{1-\beta_\tau^2}}\right),
    \end{equation}
which shows that, as $z\to\infty$,
    \[
    f(z)=\frac{\tau\sqrt{1-\beta_\tau^2}}{z}+\mathcal O(z^{-2}).
    \]
Matching this with the desired behavior \eqref{f_tau<-1_infinity_property} gives us the expression for $\beta_\tau$ in \eqref{supp_mu_tau<-1}. 

Notice that by the first expression in \eqref{f_tau<-1_expression_B}, for $x\in(\beta_\tau,1)$,
$$
f_+(x)-f_-(x)= -\frac{\tau}{2} \left(\log_+\left(\Phi(x/\beta_\tau)-\Phi(1/\beta_\tau)\right) - \log_-\left(\Phi(x/\beta_\tau)-\Phi(1/\beta_\tau)\right)  \right)=-\tau\pi i,
$$
which is the jump condition \eqref{f_tau<-1_limit_subtract_property} on  $ (\beta_\tau,1)$. The fact that the same jump condition holds on $(-1,-\beta_\tau)$ is proved in a similar fashion.

Using the second expression in \eqref{f_tau<-1_expression_B} and the property that the Blaschke product $b (z;a)$ satisfies $|b (z;a)|=1$ for $|z|=1$, we conclude that for $x\in [ -\beta_\tau,\beta_\tau ]$,  
$$
f_+(x)+f_-(x)= 2  \operatorname{Re}f_\pm(x) = -\tau \log\left| \frac{b\left(\Phi(x/\beta_\tau);-1/\Phi(1/\beta_\tau)\right) }{b\left(\Phi(x/\beta_\tau);1/\Phi(1/\beta_\tau)\right)} \right|=0,
$$
which gives us \eqref{f_tau<-1_limit_sum_property}.

    We now verify that \eqref{f_tau<-1_expression_2} satisfies \eqref{f_tau<-1_Re_f_sign_property}. By formula \eqref{f_tau<-1_expression_2},
    $$
       \Re f(z)=\frac{\tau}{2}\log\left|  \frac{(z^2-\beta_\tau^2)^{1/2}+ \sqrt{1-\beta_\tau^2}}{(z^2-\beta_\tau^2)^{1/2}- \sqrt{1-\beta_\tau^2}}\right|
    $$
    satisfies
    $$
      \Re f(z) = -   \Re f(-z),
    $$
    so it is sufficient to consider the case $x\in [\beta_\tau,1]$. Notice that for $0<a<1$ and $1<z<1/a$, 
    $$
    |b(z;-a)|- |b(z;a)|=-2 a\, \frac{  z^2-1 }{1-a^2
   z^2}<0,
    $$
    which, by \eqref{f_tau<-1_expression_B}, proves that 
    $$
    \Re f_\pm(x) = -\frac{\tau}{2}\log\left|\frac{b\left(\Phi(x/\beta_\tau);-1/A\right) }{b\left(\Phi(x/\beta_\tau);1/A\right)} \right|< 0, \quad x\in (\beta_\tau,1).
    $$

 By Sokhotski–-Plemelj's formula, and using \eqref{symmetryprinciple} and \eqref{f_tau<-1_expression_2}, we conclude that, for \newline$x\in (-\beta_\tau,\beta_\tau)$, 
    \begin{align*}
        \mu_\tau'(x)&=-\frac{1}{2\pi i}(C_+^{\mu_\tau}(x)-C_-^{\mu_\tau}(x))=-\frac{1}{2\pi i}(f_+(x)-f_-(x))-\tau\left(-\frac{1}{2\pi i}(C_+^\lambda(x)-C_-^\lambda(x))\right)\\
        &=-\frac{1}{\pi } \Im f_+(x)-\tau\lambda'(x) = -\frac{\tau}{2\pi }\arg\left(  \frac{i \sqrt{\beta_\tau^2-x^2} + \sqrt{1-\beta_\tau^2}}{i \sqrt{\beta_\tau^2-x^2}- \sqrt{1-\beta_\tau^2}}\right) -\frac{\tau}{2}\\
        &=-\frac{\tau}{\pi}\left(\frac{\pi}{2}-\arctan\left(\sqrt{\frac{1-\beta_\tau^2}{\beta_\tau^2-x^2}}\right)\right),
    \end{align*}
    and expression \eqref{mu_tau<-1_expression_1} follows. Since $\tau<0$ and $\arctan(\mathbb{R}_+)=(0,\frac{\pi}{2})$, we see that $\mu_\tau$ is a positive measure. 

    The analytic expression for $C^{\mu_\tau}(z)=f(z)-\tau C^\lambda(z)$ also follows from \eqref{f_tau<-1_expression_2}:
\begin{align*}
        C^{\mu_\tau}(z)&=\frac{\tau}{2}\log\left(  \frac{(z^2-\beta_\tau^2)^{1/2}+ \sqrt{1-\beta_\tau^2}}{(z^2-\beta_\tau^2)^{1/2}- \sqrt{1-\beta_\tau^2}}\right)-\frac{\tau}{2}\log\left(\frac{z+1}{z-1}\right) \\
    & = \tau\log\left(\frac{(z^2-\beta_\tau^2)^{1/2}+\sqrt{1-\beta_\tau^2}}{z+1}\right),
\end{align*}
which is the right hand side of \eqref{Cauchy_Transform_tau<-1_expression_2}.

We need to find the primitive (``$g$-function''),
$$
g^{\mu_\tau}(z)=\int^z C^{\mu_\tau}(y)\ dy
$$
of $C^{\mu_\tau}$, normalized in such a way that $g^{\mu_\tau}(z)=\log(z)+\mathcal{O}(z^{-1})$ as $z\to\infty$. Since for $\lambda$ such a primitive was calculated in \eqref{GforLebesgue}, we need to focus on a primitive of $f$, which can be found explicitly:
\begin{align*}
\int^z f(y)\, dy &=     z f(z)+ \frac{\tau}{2} \left( 2 \sqrt{1-\beta_\tau ^2} \log \left(\beta_\tau\Phi\left(\frac{z}{\beta_\tau}\right)\right)\right.
    \\
   & \quad\quad \left. -2\log
   \left(\frac{(z^2-\beta_\tau^2)^{1/2}+z\sqrt{1-\beta_\tau^2}}{ 
   \beta_\tau}\right)+\log
   \left(z^2-1\right)  \right).
\end{align*}

Combining it with \eqref{GforLebesgue}, we conclude that
\begin{align*}
    g^{\mu_\tau}(z)&=zC^{\mu_\tau}(z)+(1+\tau)\log\left(\beta_\tau\Phi\left(\frac{z}{\beta_\tau}\right)\right)-\tau\log\left(\frac{(z^2-\beta_\tau^2)^{1/2}+z\sqrt{1-\beta_\tau^2}}{\beta_\tau}\right)-\alpha,
\end{align*}
with
$$
\alpha=1+(1+\tau)\log(2)-\tau\log\left(\frac{1+\sqrt{1-\beta_\tau^2}}{\beta_\tau}\right),
$$
and expression \eqref{g_function_tau<-1} follows. Taking the negative real part of \eqref{g_function_tau<-1} establishes \eqref{log_potential_tau<-1} and, specialized to $x\in [-\beta_\tau, \beta_\tau]$, \eqref{Log_Potential_mu_when_-1<=tau_<=/(pi-2)_[-1,1]}. Since $0\in\supp(\mu_\tau)$, we obtain \eqref{equilibrium_constant_tau<-1} by evaluating $V^{\mu_\tau}(0)+\tau V^\lambda(0)$.
\end{proof}

\section{Disconnected support: \texorpdfstring{$\tau>2/(\pi-2)$}{\tau>2/(\pi-2)}} \label{sec:case3}

For $0\le k\le 1$ and $a>0$, consider the elliptic integrals
\begin{equation*}\label{complete_elliptic_integral_of_2nd_kind}
        E(k)=\int_0^{\pi/2}\sqrt{1-k^2\sin^2\theta}\, d\theta = \int_0^1 \sqrt{\frac{1-k^2 x^2}{1-x^2}}\, dx
    \end{equation*}
and
\begin{equation}\label{elliptic_integral_I}
        I(a,k)=\int_0^{\pi/2}\frac{1}{a+\sqrt{1-k^2\sin^2\theta}}\, d\theta = 
        \int_0^1 \frac{1}{a+\sqrt{1-k^2 x^2}} 
        \frac{dx}{\sqrt{1-x^2}}>0 .
    \end{equation}
It is well-known that $E(\cdot)$ is strictly decreasing on $[0,1]$ and $E:[0,1] \to [1,\pi/2]$ is a bijection. 

\begin{theorem}\label{theorem_case_tau>2/(pi-2)}
    For $\tau>\frac{2}{\pi-2}$, $\mu_\tau$ is an absolutely continuous measure on $\supp(\mu_\tau)=\newline\left[-1,-\beta_\tau\right]\cup\left[\beta_\tau,1\right]$, where $\beta_\tau\in (0,1)$ is the only solution of the equation 
    \begin{equation}\label{supp_mu_tau>2/(pi-2)}
       E(\beta_\tau)=   \frac{1}{\tau}+1  \in (1,\pi/2).
    \end{equation}
With the notation \eqref{elliptic_integral_I},
    \begin{equation}\label{mu_tau>2/(pi-2)}
    \begin{split}
         \mu_\tau'(x)&=\frac{\tau}{\pi}|x|\sqrt{\frac{x^2-\beta_\tau^2}{1-x^2}}\, I\left(\sqrt{1-x^2},\beta_\tau \right).
    \end{split}
    \end{equation}
Furthermore, for $z\in \C\setminus\supp(\mu_\tau)$,
\begin{equation}\label{Cauchy_Transform_mu_tau>2/(pi-2)}
        C^{\mu_\tau}(z)=\frac{\tau}{2}\left(\frac{z^2-\beta_\tau^2}{z^2-1}\right)^{1/2}\int_{-\beta_\tau}^{\beta_\tau} \sqrt{\frac{1-x^2}{\beta_\tau^2-x^2}}\frac{dx}{z-x} -\frac{\tau}{2}\log\left(\frac{z+1}{z-1}\right).
    \end{equation}
\end{theorem}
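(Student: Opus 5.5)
We will run the same program as in the proof of Theorem~\ref{theorem_case_tau<-1}: guess the form of the support, find $f=C^{\mu_\tau}+\tau C^\lambda$ from its boundary behavior, and then check every equilibrium condition directly. Since \eqref{equilibrium} characterizes $\mu_\tau$ uniquely, it is enough to exhibit a probability measure with the stated support that satisfies \eqref{equilibrium}.

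\emph{Construction of $f$.} The field $\varphi=\tau V^\lambda$ is even and concave for $\tau>0$, and the arcsine part of the full-support solution of Theorem~\ref{theorem_case_-1<=tau<=2/(pi-2)} ceases to be positive at $x=0$ once $\tau>2/(\pi-2)$. This suggests the ansatz $\supp(\mu_\tau)=[-1,-\beta]\cup[\beta,1]$, with $\beta\in(0,1)$ to be determined. As before, $f$ is holomorphic off $[-1,1]$, satisfies $f(\bar z)=\overline{f(z)}$, and has $f(z)=(1+\tau)/z+\mathcal O(z^{-2})$ at infinity. Its boundary values satisfy
\begin{equation*}
f_++f_-=0 \ \text{ on the support}, \qquad f_+-f_-=-\tau\pi i \ \text{ on } (-\beta,\beta).
\end{equation*}
Put $R(z)=\bigl((z^2-\beta^2)/(z^2-1)\bigr)^{1/2}$; it tends to $1$ at infinity, satisfies $R_+=-R_-$ on the support, and is continuous across $(-\beta,\beta)$. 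Then $h=f/R$ has no jump on the support and has jump $-\tau\pi i/R(x)$ on $(-\beta,\beta)$, where $R$ takes a single well-defined value there. On $(-\beta,\beta)$ the principal branch gives $R(x)=\sqrt{(\beta^2-x^2)/(1-x^2)}$, with the sign fixed by continuity from $z\to+\infty$. Plemelj's formula then yields
\begin{equation*}
h(z)=\frac{\tau}{2}\int_{-\beta}^{\beta}\sqrt{\frac{1-x^2}{\beta^2-x^2}}\,\frac{dx}{z-x}.
\end{equation*}
Uniqueness of this $h$ requires excluding an additional entire part. We allow only behavior of $f$ that is at worst $|z\mp1|^{-1/2}$ at $\pm1$ (hard edges) and bounded at $\pm\beta$; then $h$ is bounded near $\pm1$ and $\mathcal O(|z\mp\beta|^{-1/2})$ near $\pm\beta$, so $h$ is determined up to a constant, which must be $0$ because $h\to0$ at infinity. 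Subtracting $\tau C^\lambda$ gives \eqref{Cauchy_Transform_mu_tau>2/(pi-2)}.

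\emph{Determining $\beta$ and the density.} Matching the $1/z$ coefficient of $f=Rh$ gives
\begin{equation*}
\frac{\tau}{2}\int_{-\beta}^{\beta}\sqrt{\frac{1-x^2}{\beta^2-x^2}}\,dx=1+\tau.
\end{equation*}
The substitution $x=\beta s$ turns the left side into $\tau E(\beta)$, which is \eqref{supp_mu_tau>2/(pi-2)}. Since $E$ is a decreasing bijection onto $[1,\pi/2]$ and $1+1/\tau\in(1,\pi/2)$ exactly when $\tau>2/(\pi-2)$, the root $\beta_\tau$ exists and is unique. The density on the support is $\mu'=-\frac1\pi\Im f_+$, because $\lambda$ contributes no jump difference after the combination is formed; equivalently, $\mu'=-\frac{1}{\pi}\Im(R_+h)-\tau/2$. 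For $x\in(\beta,1)$, $R_+(x)$ is purely imaginary and $h(x)$ is real, so $\mu'(x)=\mp\frac1\pi|R_+(x)|\,h(x)-\frac\tau2$. We will then rewrite
\begin{equation*}
h(x)=\frac\tau2\int_{-\beta}^{\beta}\sqrt{\frac{1-t^2}{\beta^2-t^2}}\,\frac{dt}{x-t}
\end{equation*}
by symmetrizing in $t\mapsto-t$, which gives $\tau x\int_0^\beta\sqrt{(1-t^2)/(\beta^2-t^2)}\,\frac{dt}{x^2-t^2}$. The substitution $t=\beta\sin\theta$ and the decomposition $\frac{1-t^2}{x^2-t^2}=1+\frac{1-x^2}{x^2-t^2}$ should then lead, using $E(\beta)=1+1/\tau$ to absorb the constant $-\tau/2$, to the compact form \eqref{mu_tau>2/(pi-2)} with $I(\sqrt{1-x^2},\beta)$. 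Positivity of the density is immediate from that formula.

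\emph{Inequalities and the main obstacle.} It remains to verify the inequality in \eqref{equilibrium} on the gap. By evenness, this reduces to showing that the total potential $-\Re\int^x f$ is minimal at $\pm\beta$ and exceeds $\omega_\tau$ on $(-\beta,\beta)$. On $(0,\beta)$ we have $\Re f=R\,\Re h$, with $R>0$ there, so it suffices to show $\Re h_\pm(x)>0$ for $0<x<\beta$. The real part of $h_\pm$ is the principal-value integral $\frac{\tau}{2}\,\mathrm{p.v.}\!\int\sqrt{(1-t^2)/(\beta^2-t^2)}\,\frac{dt}{x-t}$. Its positivity on $(0,\beta)$ should follow because the weight $\sqrt{(1-t^2)/(\beta^2-t^2)}$ is even and increasing in $|t|$: pairing $t$ with $-t$ gives $2x\,\mathrm{p.v.}\!\int_0^\beta w(t)\,dt/(x^2-t^2)$, whose sign we will control by writing $w=w(x)+(w-w(x))$, using that the p.v.\ integral of the constant part has an explicit sign. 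This sign verification, and in particular making the monotonicity argument rigorous near $t=x$ and near $t=\beta$ where $w$ blows up, is the step we expect to be hardest. Together with equality of the potential on both support intervals, which is automatic from symmetry and $f_++f_-=0$, this completes the argument.
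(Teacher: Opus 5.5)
\textbf{Assessment.} Your construction of $f$ and the normalization giving $E(\beta_\tau)=1+1/\tau$ match the paper; your $f/R$ is exactly the paper's auxiliary function $F$. There is, however, a genuine gap in the two analytic checks that make the candidate an equilibrium measure: the closed form (hence positivity) of the density, and the inequality on the gap. Neither of the mechanisms you propose for these steps works.

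\textbf{The density step.} The relation $E(\beta)=1+1/\tau$ cannot absorb the constant $-\tau/2$. You need to show that $\frac1\pi\sqrt{\frac{x^2-\beta^2}{1-x^2}}\,h(x)-\frac\tau2$ equals the right-hand side of \eqref{mu_tau>2/(pi-2)}. Both sides are $\tau$ times an expression in $x$ and $\beta$ alone. So the required identity holds for every $\beta\in(0,1)$, whatever the relation between $\beta$ and $\tau$, and a constraint linking them cannot be what closes it. Your $K$/$\Pi$ splitting only reproduces part (ii) of the remark after the theorem. It leaves the $-\tau/2$ in place, and no sign can be read off.

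\textbf{The gap step.} The monotonicity heuristic is false. The arcsine weight $(\beta^2-t^2)^{-1/2}$ is even and increasing in $|t|$, yet its principal-value transform vanishes identically on $(-\beta,\beta)$. Worse, $(\beta^2-t^2)^{-a}$ with $\tfrac12<a<1$ has a transform that behaves like a positive constant times $\pi\cot(\pi a)(\beta-x)^{-a}$, which tends to $-\infty$ as $x\to\beta^-$. Your splitting $w=w(x)+(w-w(x))$ shows the problem concretely. It gives a positive term $w(x)\log\frac{\beta+x}{\beta-x}$ and a remainder $2x\int_0^\beta\frac{w(t)-w(x)}{x^2-t^2}\,dt$ that is negative because $w$ increases. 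Both blow up as $x\to\beta^-$, while their sum stays bounded, so the sign is not controlled.

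\textbf{The missing idea.} This is the paper's Lemma~\ref{lem_identity}. Write $w(t)=\sqrt{1-t^2}\,(\beta^2-t^2)^{-1/2}$ and subtract $\sqrt{1-x^2}\,(\beta^2-t^2)^{-1/2}$, not $w(x)$. By \eqref{Int_Identity} and \eqref{SPformula}, the arcsine part has principal-value transform $0$ for $x\in(-\beta,\beta)$, and its transform equals $\pi/\sqrt{x^2-\beta^2}$ for $x>\beta$. What remains is nonsingular:
\[
\frac{\sqrt{1-t^2}-\sqrt{1-x^2}}{x-t}=\frac{x+t}{\sqrt{1-x^2}+\sqrt{1-t^2}} .
\]
Dropping its odd part leaves $2x\,I(\sqrt{1-x^2},\beta)$. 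For $x\in(0,\beta)$ this is positive, which gives $\Re f_\pm>0$ on $(0,\beta)$, as needed. For $x\in(\beta,1)$, the extra term $\sqrt{1-x^2}\,\pi/\sqrt{x^2-\beta^2}$ is exactly what cancels $-\tau/2$ and produces \eqref{mu_tau>2/(pi-2)}, and with it positivity of the density. The monotonicity that actually drives the sign is that of $\sqrt{1-t^2}$, which decreases in $|t|$, not that of $w$.
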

For the plot of the density of the equilibrium measure $\mu_\tau$ for $\tau=2$, see Figure~\ref{fig:density_tau>2/(pi-2)}.

\begin{figure}[ht!]
    \centering
    \begin{minipage}{.45\linewidth}
        \includegraphics[width=\linewidth]{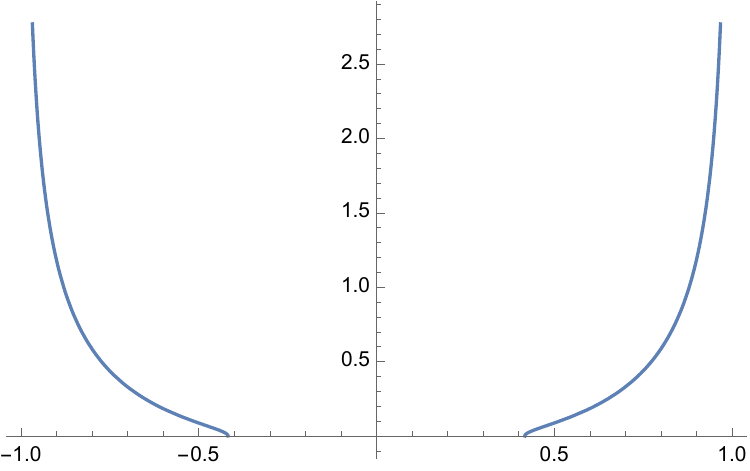}
        \caption{The density $\mu_{\tau}'$ for $\tau=2$, with $\beta_{2}\approx0.417299$.}
    \label{fig:density_tau>2/(pi-2)}
    \end{minipage}
    \hspace{.05\linewidth}
    \begin{minipage}{.45\linewidth}
        \includegraphics[width=\linewidth]{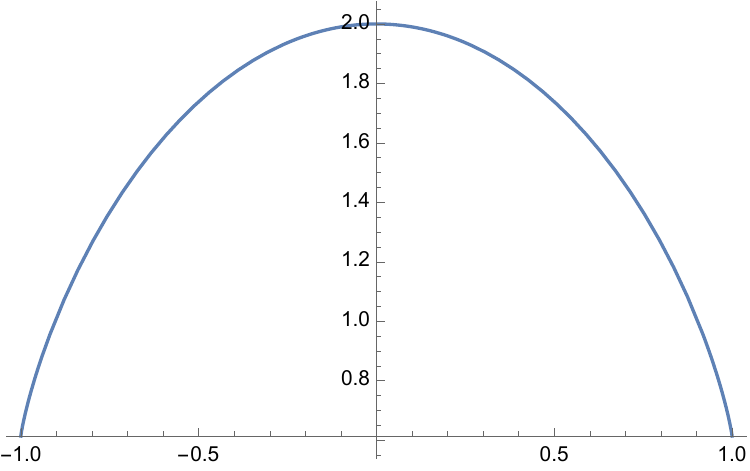}
        \caption{The external field for $\tau=2$.}
    \label{fig:external_field_tau=2}
    \end{minipage}
\end{figure}
\begin{remark}
\begin{enumerate}
\item[\textit{(i)}] We can calculate an approximation to the end point $\beta_\tau$ by inverting the well-known series expansion for $E$,
    $$
    E(k)=\frac{\pi}{2} \left(1-\sum_{n=1}^\infty \left( \frac{(2n-1)!!}{(2n)!!}\right)^2 \frac{k^{2n}}{2n-1}\right)=\frac{\pi}{2}\, {}_2F_1 \left(\begin{array}{c} 1/2, -1/2 \\ 1
    \end{array}; k^2\right),
    $$
    so that
    $$
    \beta_\tau= 2\sqrt{\alpha_\tau}\left(1-\frac{3}{8} \alpha_\tau-\frac{17}{128}\alpha_\tau^2+\dots \right), \quad \alpha_\tau:=\frac{2}{\pi} \left(\frac{\pi-2}{2}- \frac{1}{\tau}\right).
    $$
\item[\textit{(ii)}]  We can express $\mu_\tau$ and $C^{\mu_\tau}$ in terms of complete elliptic integrals. Letting
    \begin{equation}
        \label{defK}
    K(k)=\int_0^{\pi/2}\frac{1}{\sqrt{1-k^2\sin^2\theta}}\ d\theta=\int_0^1\frac{dx}{\sqrt{(1-x^2)(1-k^2x^2)}},
\end{equation}
    denote the complete elliptic integral of the first kind and
    \[\Pi(n,k)=\int_0^{\pi/2}\frac{1}{\left(1-n\sin^2\theta\right)\sqrt{1-k^2\sin^2\theta}}\ d\theta=\int_0^1\frac{1}{1-nx^2}\frac{dx}{\sqrt{(1-x^2)(1-k^2x^2)}}\]
    denote the complete elliptic integral of the third kind, we obtain
    \begin{align*}
         d\mu_\tau(x)&=\frac{\tau}{2\pi}\left[2|x|\sqrt{\frac{x^2-\beta_\tau^2}{1-x^2}}\left(K(\beta_\tau)+\frac{1-x^2}{x^2}\Pi\left(\frac{\beta_\tau^2}{x^2},\beta_\tau\right)\right)-\pi\right]\, dx
     \end{align*}
     and
     \begin{align*}
         C^{\mu_\tau}(z)&=\tau z\left(\frac{z^2-\beta_\tau^2}{z^2-1}\right)^{1/2}\left(K(\beta_\tau)+\frac{1-z^2}{z^2}\Pi\left(\frac{\beta_\tau^2}{z^2},     \beta_\tau\right)\right)-\frac{\tau}{2}\log\left(\frac{z+1}{z-1}\right).
     \end{align*}
\item[\textit{(iii)}] The density $\mu_\tau'$ exhibits the square-root asymptotics at the end-points of the support, corresponding to the standard ``soft'' (at $\pm \beta_\tau$) and ``hard edge'' (at $\pm 1$) behavior:
    \[\mu_\tau'(x)=\frac{\tau}{\pi}|x|K(\beta_\tau)\sqrt{\frac{x^2-\beta_\tau^2}{1-x^2}}\left(1+\mathcal{O}\left(1\right)\right),\quad x\to\pm1\]
    and
    \begin{align*}
        \mu_\tau'(x)=\frac{\tau\gamma_\tau(x)}{2\pi}\sqrt{|x\mp\beta_\tau|}\left(1+\mathcal{O}(x\mp\beta_\tau)\right),\quad x\to\pm\beta_\tau,
    \end{align*}
    where
\[
\gamma_\tau(x)=2|x|K(\beta_\tau)\sqrt{\frac{|x\pm \beta_\tau|}{1-x^2}}-\frac{\pi\beta_\tau^{3/2}\sqrt{1-\beta_\tau^2}}{\sqrt{2}}{}_2F_1 \left(\begin{array}{c} 3/2, 3/2 \\ 2
    \end{array}; \beta_\tau\right)=\mathcal{O}(1),\quad x\to\pm\beta_\tau.\]
    \end{enumerate}
\end{remark}
\begin{proof}
    When $\tau>\frac{2}{\pi-2}$, the external field becomes concave down and even, see Figure \ref{fig:external_field_tau=2}. Thus, we make an ansatz that $\operatorname{supp}(\mu_\tau)=[-1,-\beta_\tau]\cup[\beta_\tau,1]$, where the value of $\beta_\tau\in[0,1]$ is to be determined.
    As in Section~\ref{sec:case2}, define $f:\mathbb{C}\setminus[-1,1]\to\mathbb{C}$ by$$
    f(z):=C^{\mu_\tau}(z)+\tau C^\lambda(z)=\int\frac{ d{\mu_\tau}(x)}{z-x}+ \tau \int\frac{ d{\lambda}(x)}{z-x}.
    $$ 
    Our goal is to obtain an explicit expression for $f$ using the properties presented next, and in this way, to recover $\mu_\tau$. Again, $f$ is a holomorphic function on its domain such that
    \begin{align*}
        &f(z)=\frac{1+\tau}{z}+\mathcal O(z^{-2}),\hspace{0.44cm} z\to\infty;\\
        &\operatorname{Re}f_\pm(x)=0,\hspace{2.05cm}  x\in[-1,-\beta_\tau]\cup[\beta_\tau,1];\\
        &f_+(x)+f_-(x)=0,\hspace{1.2cm} x\in(-1,-\beta_\tau)\cup(\beta_\tau,1);\quad\text{and}\\
        &f_+(x)-f_-(x)=-\tau\pi i,\hspace{0.52cm} x\in(-\beta_\tau,\beta_\tau),
    \end{align*}
    and we make the ansatz
    \begin{equation*}
        \operatorname{Re} f_\pm(x)\geq0 \text{ on } (0,\beta_\tau),\quad \text{and} \quad \operatorname{Re} f_\pm(x)\leq0 \text{ on } (-\beta_\tau,0).\label{f_tau>2/(pi-2)_Re_f_sign_property}
    \end{equation*}
    Since $V^{\mu_\tau}(x)+\tau V^\lambda(x)$ is even on $[-\beta_\tau,\beta_\tau]$, $\operatorname{Re}f_\pm(x)$ is odd on $[-\beta_\tau,\beta_\tau]$, so
    \begin{equation*}\label{f_tau>2/(pi-2)_integral=0_property}
        \operatorname{Re}\int_{-\beta_\tau}^{\beta_\tau} f_\pm(x)\ dx=0.
    \end{equation*}
    \indent Function $h:\mathbb{C}\setminus([-1,-\beta_\tau]\cup[\beta_\tau,1])\to\mathbb{C}$, defined by
    \begin{equation*}
        h(z)=\left(\frac{z^2-1}{z^2-\beta_\tau^2}\right)^{1/2},
    \end{equation*}
    is holomorphic and even on its domain and satisfies the following properties:
    \begin{align}
        &h_-(x)=-h_+(x),\quad\ x\in(-1,-\beta_\tau)\cup(\beta_\tau,1);\nonumber\\
        &h_\pm(x)\in \pm i\mathbb{R}_{+},\hspace{0.95cm} x\in(\beta_\tau,1);\nonumber\\
        &h_\pm(x)\in \mp i\mathbb{R}_{+},\hspace{0.95cm}  x\in(-1,-\beta_\tau);\nonumber\\
        &\lim_{z\to \infty}h(z)=1;\hspace{0.65cm}\quad \text{and}\nonumber\\
        &h(x)>0,\hspace{1.86cm} x\in(-\beta_\tau,\beta_\tau).\label{h_positive_on_(-beta,beta)}
    \end{align}
    Define $F:\mathbb{C}\setminus([-1,-\beta_\tau]\cup[\beta_\tau,1])\to\mathbb{C}$ by
    \begin{equation*}\label{F_tau>2/(pi-2)_definition}
        F(z)=h(z)f(z).
    \end{equation*}
    Then $F$ is holomorphic on its domain, and by the above properties of $f$ and $h$, it satisfies
    \begin{align}
        &F_+(x)-F_-(x)=-\tau\pi ih(x),\quad x\in(-\beta_\tau,\beta_\tau);\label{F_(1)_tau>2/(pi-2)}\\
        &F_+(x)-F_-(x)=0,\hspace{1.815cm} x\in(-1,-\beta_\tau)\cup(\beta_\tau,1);\label{F_(2)_tau>2/(pi-2)}\\
        &F_\pm(x)\in\mathbb{R},\hspace{3.33cm} x\in[-1,-\beta_\tau]\cup[\beta_\tau,1];\label{F_(2')_tau>2/(pi-2)}\\
        &F(z)=\frac{1+\tau}{z}+\mathcal O(z^{-2}),\hspace{1.12cm} z\to\infty;\label{F_(3)_tau>2/(pi-2)}\\
        &\operatorname{Re}\int_{-\beta_\tau}^{\beta_\tau} \frac{F_\pm(x)}{h(x)}\ dx=0\label{F_(4)_tau>2/(pi-2)};\\
        &\operatorname{Re}F_\pm(x)\geq0  \text{ on } (0,\beta_\tau);\quad \text{and} \quad  \operatorname{Re}F_\pm(x)\leq0 \text{ on }  x\in(-\beta_\tau,0).\label{F_(5)_tau>2/(pi-2)}
    \end{align}

The Sokhotski–-Plemelj formula implies that, if $g$ is a smooth function on $(-\beta_\tau, \beta_\tau)$, then
\begin{equation}
    \label{SPformula}
    \text{p.v.}\, \int_{-\beta_\tau}^{\beta_\tau} \frac{g(y)}{x-y}\, dy = \lim_{\varepsilon\downarrow 0}\left(\int_{-\beta_\tau}^{\beta_\tau} \frac{g(y)}{x\pm i\varepsilon-y}\, dy\right) \pm \pi i g(x), \quad x\in ( -\beta_\tau, \beta_\tau).
\end{equation}
  By \eqref{F_(1)_tau>2/(pi-2)}, \eqref{F_(2)_tau>2/(pi-2)}, and \eqref{SPformula}, we have
    \begin{equation}\label{F_tau>2/(pi-2)}
        F(z)=\frac{1}{2\pi i}\int_{-\beta_\tau}^{\beta_\tau}\frac{-\tau\pi ih(x)}{x-z}\ dx=-\frac{\tau}{2}\int_{-\beta_\tau}^{\beta_\tau}\frac{h(x)}{x-z}\ dx.
    \end{equation}
    \indent We now verify that \eqref{F_tau>2/(pi-2)} satisfies properties \eqref{F_(2')_tau>2/(pi-2)}-\eqref{F_(5)_tau>2/(pi-2)}. \eqref{F_(2')_tau>2/(pi-2)} is satisfied because of \eqref{h_positive_on_(-beta,beta)}. We obtain
    \begin{equation*}\label{f_tau>2/(pi-2)_Infinity_Property_Work_1}
    \begin{split}
        F(z)&=\frac{\tau E(\beta_\tau)}{z}\left(1+\mathcal{O}\left(\frac{1}{z}\right)\right),\quad z\to\infty.
    \end{split}
    \end{equation*}
    Matching this with the desired behavior \eqref{F_(3)_tau>2/(pi-2)} gives us the expression for $\beta_\tau$ in \eqref{supp_mu_tau>2/(pi-2)}.\newline
    \indent By \eqref{SPformula}, we have for $x\in(-\beta_\tau,\beta_\tau)$ that
    \begin{align*}
        F_\pm(x)=\mp\frac{\tau\pi i}{2}h(x)+\text{p.v.}\ \frac{\tau}{2}\int_{-\beta_\tau}^{\beta_\tau}\frac{h(y)}{x-y}\ dy.
    \end{align*}
    Thus, \eqref{h_positive_on_(-beta,beta)} implies for all $x\in(-\beta_\tau,\beta_\tau)$ that
\begin{equation}\label{Re(F_pm)_principal_value_expression}
        \Re F_\pm(x)=\operatorname{p.v.}\frac{\tau}{2}\int_{-\beta_\tau}^{\beta_\tau}\frac{h(y)}{x-y}\ dy.
    \end{equation}
    Observing that
    \eqref{Re(F_pm)_principal_value_expression} is odd on $(-\beta_\tau,\beta_\tau)$ and $h$ is even on $(-\beta_\tau,\beta_\tau),$ we obtain \eqref{F_(4)_tau>2/(pi-2)}. 
    To prove that \eqref{F_tau>2/(pi-2)} satisfies \eqref{F_(5)_tau>2/(pi-2)}, it suffices to show that
    \begin{equation*}\label{Int_Ineq_Sufficient_1b}
       \operatorname{p.v.}\int_{-\beta_\tau}^{\beta_\tau}\frac{h(y)}{x-y}\ dy
    \end{equation*}
    is $\leq 0$ for $x\in(-\beta_\tau,0)$, and $\geq 0$ for $x\in(0,\beta_\tau)$. This is an immediate consequence of the following identity:
    \begin{lem} \label{lem_identity}
    For $x\in (-\beta_\tau,\beta_\tau)$,
    $$
   \operatorname{p.v.} \int_{-\beta_\tau}^{\beta_\tau} \sqrt{\frac{1-y^2}{\beta_\tau^2-y^2}}\, \frac{dy}{x-y}= x \int_{-\beta_\tau}^{\beta_\tau}  \frac{dy}{\sqrt{\beta_\tau^2-y^2}\, \left(\sqrt{1-x^2} + \sqrt{1-y^2}   \right) } =2xI\left(\sqrt{1-x^2},\beta_\tau \right). 
    $$
    \end{lem}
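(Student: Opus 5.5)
Write $\beta=\beta_\tau$. The natural first step is to split off the singular part by writing
$$
\sqrt{1-y^2}=\sqrt{1-x^2}+\left(\sqrt{1-y^2}-\sqrt{1-x^2}\right).
$$
This gives
$$
\operatorname{p.v.}\int_{-\beta}^{\beta}\sqrt{\frac{1-y^2}{\beta^2-y^2}}\,\frac{dy}{x-y}
=\sqrt{1-x^2}\;\operatorname{p.v.}\int_{-\beta}^{\beta}\frac{dy}{\sqrt{\beta^2-y^2}\,(x-y)}
+\int_{-\beta}^{\beta}\frac{\sqrt{1-y^2}-\sqrt{1-x^2}}{\sqrt{\beta^2-y^2}\,(x-y)}\,dy .
$$

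The first principal value integral is the classical one,
$$
\operatorname{p.v.}\int_{-\beta}^{\beta}\frac{dy}{\sqrt{\beta^2-y^2}\,(x-y)}=0, \qquad x\in(-\beta,\beta).
$$
This is the statement that the Robin measure of $[-\beta,\beta]$ has constant potential there; equivalently, $C^{\eta_\beta}(z)=(z^2-\beta^2)^{-1/2}$ is purely imaginary on the cut. I will cite it directly, or obtain it from the boundary values of $(z^2-\beta^2)^{-1/2}$ via \eqref{SPformula}.

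The second integral has a removable singularity at $y=x$, so it is an ordinary integral. Rationalizing the numerator gives
$$
\frac{\sqrt{1-y^2}-\sqrt{1-x^2}}{x-y}=\frac{(1-y^2)-(1-x^2)}{(x-y)\left(\sqrt{1-y^2}+\sqrt{1-x^2}\right)}=\frac{x+y}{\sqrt{1-x^2}+\sqrt{1-y^2}} .
$$
Substituting, the contribution of the term $y$ is the integral of an odd function over the symmetric interval $[-\beta,\beta]$, so it vanishes. What remains is exactly
$$
x\int_{-\beta}^{\beta}\frac{dy}{\sqrt{\beta^2-y^2}\left(\sqrt{1-x^2}+\sqrt{1-y^2}\right)},
$$
which is the middle expression in the lemma.

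For the last equality, the integrand is even in $y$, so the integral equals twice the integral over $[0,\beta]$. I then substitute $y=\beta\sin\theta$, under which $dy/\sqrt{\beta^2-y^2}=d\theta$ and $\sqrt{1-y^2}=\sqrt{1-\beta^2\sin^2\theta}$. This turns the integral into $2\int_0^{\pi/2}\frac{d\theta}{\sqrt{1-x^2}+\sqrt{1-\beta^2\sin^2\theta}}=2I(\sqrt{1-x^2},\beta)$ by \eqref{elliptic_integral_I}.

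The only delicate point is justifying the split of the principal value. The remainder term is continuous in $y$ for fixed $x$ and has only integrable endpoint singularities of the form $(\beta^2-y^2)^{-1/2}$, so it converges absolutely. The principal value is therefore linear over this splitting, and no other step presents a real obstacle.
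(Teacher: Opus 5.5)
Your proposal is correct and follows the paper's proof essentially step by step. Both arguments subtract $\sqrt{1-x^2}$ from the numerator using the vanishing principal value $\operatorname{p.v.}\int_{-\beta_\tau}^{\beta_\tau}\frac{dy}{\sqrt{\beta_\tau^2-y^2}\,(x-y)}=0$ (which the paper obtains from \eqref{Int_Identity} and \eqref{SPformula}), rationalize to get $(x+y)/(\sqrt{1-x^2}+\sqrt{1-y^2})$, and drop the odd part. Your substitution $y=\beta_\tau\sin\theta$ for the final equality simply makes explicit a step the paper leaves implicit.
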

Indeed, since
\begin{equation}\label{Int_Identity}
\int_{-\beta_\tau}^{\beta_\tau} \frac{dy}{\sqrt{\beta_\tau^2-y^2}(z-y)}\, dy=\frac{\pi}{\sqrt{z^2-\beta_\tau^2}}, \quad z\in \C\setminus[-\beta_\tau,\beta_\tau],
\end{equation}
using \eqref{SPformula} we conclude that
$$
\text{p.v.}\, \int_{-\beta_\tau}^{\beta_\tau} \frac{1}{\sqrt{\beta_\tau^2-y^2}}\frac{dy}{x-y}  =0.
$$
Hence,
\begin{align*}
    \operatorname{p.v.} \int_{-\beta_\tau}^{\beta_\tau} \sqrt{\frac{1-y^2}{\beta_\tau^2-y^2}}\, \frac{dy}{x-y} &= \operatorname{p.v.} \int_{-\beta_\tau}^{\beta_\tau} \frac{\sqrt{1-y^2}-\sqrt{1-x^2}}{\sqrt{\beta_\tau^2-y^2}}\, \frac{dy}{x-y}\\
    &= \int_{-\beta_\tau}^{\beta_\tau}  \frac{x+y}{\sqrt{\beta_\tau^2-y^2}\, \left(\sqrt{1-x^2} + \sqrt{1-y^2}   \right) }\, dy\\
    &=x \int_{-\beta_\tau}^{\beta_\tau}  \frac{dy}{\sqrt{\beta_\tau^2-y^2}\, \left(\sqrt{1-x^2} + \sqrt{1-y^2}   \right) },
\end{align*}
which proves the identity.

Hence, properties \eqref{F_(1)_tau>2/(pi-2)}--\eqref{F_(5)_tau>2/(pi-2)} of $F$ have been established. Unraveling the previous transformation, we have that 
    \begin{equation}\label{f_int_formula_tau>2/(pi-2)}
        f(z)=\frac{F(z)}{h(z)}=\frac{\tau}{2}\left(\frac{z^2-\beta_\tau^2}{z^2-1}\right)^{1/2}\int_{-\beta_\tau}^{\beta_\tau}\sqrt{\frac{1-x^2}{\beta_\tau^2-x^2}}\frac{dx}{z-x},
    \end{equation}
    which implies that
    \begin{equation*}
        C^{\mu_\tau}(z)=f(z)-\tau C^\lambda(z)= \frac{\tau}{2}\left(\frac{z^2-\beta_\tau^2}{z^2-1}\right)^{1/2}\int_{-\beta_\tau}^{\beta_\tau} \sqrt{\frac{1-x^2}{\beta_\tau^2-x^2}}\frac{dx}{z-x} -\frac{\tau}{2}\log\left(\frac{z+1}{z-1}\right),
    \end{equation*}
    see \eqref{Cauchy_Transform_mu_tau>2/(pi-2)}.
    
    By applying the Stieltjes-Perron inversion formula and then \eqref{Int_Identity}, we conclude for \newline$x\in [-1,-\beta_\tau]\cup[\beta_\tau,1]$ that
    \begin{align*}
        \mu_\tau'(x)&=-\frac{1}{\pi}\operatorname{Im}C_+^{\mu_\tau}(x)=-\frac{1}{\pi}\operatorname{Im}\left(f_+(x)-\tau C_+^{\lambda}(x)\right)\\
        &=\frac{\tau}{2\pi}\left[\operatorname{sgn}(x)\sqrt{\frac{x^2-\beta_\tau^2}{1-x^2}}\int_{-\beta_\tau}^{\beta_\tau}\sqrt{\frac{1-y^2}{\beta_\tau^2-y^2}}\frac{dy}{x-y}-\pi\right]\\
        &=\frac{\tau}{2\pi}\left[\operatorname{sgn}(x)\sqrt{\frac{x^2-\beta_\tau^2}{1-x^2}}\int_{-\beta_\tau}^{\beta_\tau}\sqrt{\frac{1-y^2}{\beta_\tau^2-y^2}}\frac{dy}{x-y}-\int_{-\beta_\tau}^{\beta_\tau}\frac{\sqrt{x^2-\beta_\tau^2}}{\sqrt{\beta^2_\tau-y^2}}\frac{dy}{x-y}\right]\\
        &=\frac{\tau}{2\pi}|x|\sqrt{\frac{x^2-\beta_\tau^2}{1-x^2}}\int_{-\beta_\tau}^{\beta_\tau}\frac{1}{\sqrt{1-y^2}+\sqrt{1-x^2}}\frac{dy}{\sqrt{\beta_\tau^2-y^2}} \\
        &= \frac{\tau}{ \pi}|x|\sqrt{\frac{x^2-\beta_\tau^2}{1-x^2}}\, I\left(\sqrt{1-x^2},\beta_\tau \right)\ge 0.
    \end{align*}
This establishes \eqref{mu_tau>2/(pi-2)}.
\end{proof}

\begin{corollary}
 For $\tau>\frac{2}{\pi-2}$, the equilibrium constant $\omega_\tau$ in \eqref{equilibrium} is given by
\begin{equation}\label{equilibrium_constant_tau>2/(pi-2)}
 \omega_\tau    = \int_1^{+\infty} \left(\frac{\tau}{2}\sqrt{\frac{x^2-\beta_\tau^2}{x^2-1}}\left( \int_{-\beta_\tau}^{\beta_\tau}\sqrt{\frac{1-s^2}{\beta_\tau^2-s^2}}\frac{ds}{x-s} \right)-\frac{1+\tau}{x}\right)\,dx.
    \end{equation}
It can be expressed in terms of a convergent series
\begin{equation}
    \label{powerseriesomega}
\omega_\tau=  \frac{1+\tau}{2}\, \log \left(\frac{4}{1-\beta_\tau^2}
   \left(\frac{1-\beta_\tau}{1+\beta_\tau}\right)^{\beta_\tau}\right) +  \tau \sum_{k=1}^{+\infty}    c_{2k-1}   \, c_{2k} \, \beta_\tau^{2k} ,
\end{equation}
where 
\begin{equation}
    \label{c0}
c_0=E(\beta_\tau)=\frac{1+\tau}{\tau}
\end{equation}
and for $k\in \N$,
\begin{equation}
    \label{defCn}
c_k:=\int_{0}^{1}\sqrt{\frac{1-\beta_\tau^2s^2}{1-s^2}}s^k ds  =\frac{\sqrt{\pi} \Gamma\left(\frac{k+1}{2}\right)}{2 \Gamma\left(\frac{k}{2}+1\right)}\, { }_2 F_1\left(-\frac{1}{2}, \frac{k+1}{2} ; \frac{k}{2}+1 ; \beta_\tau^2\right) .
\end{equation}
The coefficients $c_k$ can also be computed from the three-term recurrence relations
\begin{equation}
    \label{rec2}
    \begin{split}
      & 4 \beta_\tau^2\, c_3-\left(1+3 \beta_\tau^2\right) c_1+1=0  , \\
&\beta_\tau^2(k+3) c_{k+2}-\left[k+\beta_\tau^2(k+2)\right] c_k+(k-1) c_{k-2}=0, \quad k \geq 2 ,
    \end{split}
\end{equation}
with the initial values $c_0$ in \eqref{c0}, 
$$
c_1= \frac{1}{2}+\frac{1-\beta_\tau^2}{2 \beta_\tau} \log\left(\frac{1+\beta_\tau}{1-\beta_\tau} \right) , \quad \text{and} \quad c_2= \frac{(2\beta_\tau^2-1)(1+\tau)}{3 \beta_\tau^2 \tau}  +\frac{1-\beta_\tau^2}{3 \beta_\tau^2}K\left( \beta_\tau\right),
$$
with $K$ defined in \eqref{defK}.
\end{corollary}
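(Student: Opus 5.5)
The plan is to write the total potential $W(z):=V^{\mu_\tau}(z)+\tau V^\lambda(z)$ as $-\Re G(z)$, where $G$ is the primitive of $f=C^{\mu_\tau}+\tau C^\lambda$ normalized so that $G(z)=(1+\tau)\log z+o(1)$ as $z\to\infty$. Since $1\in\supp(\mu_\tau)$ and $W$ is continuous on $[-1,1]$, we have $\omega_\tau=W(1)=-\Re G(1)$. On $(1,+\infty)$ the function $f$ is real, so
$$
-G(1)=\int_1^R f(x)\,dx-G(R)=\int_1^R\Big(f(x)-\frac{1+\tau}{x}\Big)dx-\big(G(R)-(1+\tau)\log R\big).
$$
Letting $R\to+\infty$ kills the last term. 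Inserting the explicit formula \eqref{f_int_formula_tau>2/(pi-2)} for $f$ on $x>1$, where the square root $\big((x^2-\beta_\tau^2)/(x^2-1)\big)^{1/2}$ is positive, gives \eqref{equilibrium_constant_tau>2/(pi-2)}. Convergence at $x=1$ holds because the singularity is $(x-1)^{-1/2}$. At infinity the integrand is $\mathcal O(x^{-2})$, because $f(x)=(1+\tau)/x+\mathcal O(x^{-2})$ by the choice \eqref{supp_mu_tau>2/(pi-2)} of $\beta_\tau$.

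For the series \eqref{powerseriesomega}, expand $1/(x-s)=\sum_{n\ge0}s^n x^{-n-1}$ for $x>1>\beta_\tau\ge|s|$. The weight $\sqrt{(1-s^2)/(\beta_\tau^2-s^2)}$ is even, so only even $n=2m$ survive. The substitution $s=\beta_\tau t$ gives
$$
\int_{-\beta_\tau}^{\beta_\tau}\sqrt{\frac{1-s^2}{\beta_\tau^2-s^2}}\,\frac{ds}{x-s}=2\sum_{m\ge0}\beta_\tau^{2m}c_{2m}\,x^{-2m-1}.
$$
The $m=0$ term carries $c_0=E(\beta_\tau)=(1+\tau)/\tau$, so in \eqref{equilibrium_constant_tau>2/(pi-2)} it combines with $-(1+\tau)/x$ into
$$
(1+\tau)\int_1^\infty\frac1x\Big(\sqrt{\tfrac{x^2-\beta_\tau^2}{x^2-1}}-1\Big)dx .
$$
This is an elementary integral: substitute $u=x^2$, rationalize, and produce logarithms. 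It should equal $\frac{1+\tau}{2}\log\big(\frac{4}{1-\beta_\tau^2}(\frac{1-\beta_\tau}{1+\beta_\tau})^{\beta_\tau}\big)$, and checking this constant is the one genuinely computational step. For $m\ge1$ the substitution $x=1/u$ turns $\int_1^\infty\sqrt{(x^2-\beta_\tau^2)/(x^2-1)}\,x^{-2m-1}dx$ into $\int_0^1\sqrt{(1-\beta_\tau^2u^2)/(1-u^2)}\,u^{2m-1}du=c_{2m-1}$. This yields the terms $\tau c_{2m-1}c_{2m}\beta_\tau^{2m}$. Exchanging sum and integral is justified by monotone convergence, since all terms for $m\ge1$ are nonnegative. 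The series converges because $c_k\le c_0$ and $\beta_\tau<1$.

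Finally, the closed form of $c_k$ in \eqref{defCn} follows by expanding $\sqrt{1-\beta_\tau^2s^2}$ binomially and integrating termwise against $s^k/\sqrt{1-s^2}$, which gives Beta integrals. The recurrences \eqref{rec2} come from integrating by parts the identity $\frac{d}{ds}\big[s^{k-1}\sqrt{(1-s^2)(1-\beta_\tau^2s^2)}\big]$ over $[0,1]$ and rewriting the result in terms of $c_{k+2},c_k,c_{k-2}$. For $k=1$ the boundary term at $s=0$ produces the constant $1$. The initial values follow from direct integration: $c_1$ is elementary via $s=\sin\theta$ and gives the logarithm, and $c_2$ is expressed through $E$ and $K$ by the standard reduction $\int_0^1 s^2\,ds/\sqrt{(1-s^2)(1-\beta_\tau^2s^2)}=(K-E)/\beta_\tau^2$, with $E(\beta_\tau)=(1+\tau)/\tau$ substituted. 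I expect the main obstacle to be bookkeeping, namely the exact log constant in the $m=0$ term and the boundary terms in the recurrence, rather than any conceptual difficulty.
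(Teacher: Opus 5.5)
Your proposal follows the paper's proof. You derive $\omega_\tau=\int_1^\infty\bigl(f(x)-\frac{1+\tau}{x}\bigr)dx$ from the normalization at infinity and $1\in\supp(\mu_\tau)$. You expand the inner integral geometrically, use Beta/Euler integrals for \eqref{defCn}, and integrate the derivative of $s^{k-1}\sqrt{(1-s^2)(1-\beta_\tau^2s^2)}$ for \eqref{rec2}.

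The only difference is in the series step. You substitute the full expansion, merge just the $m=0$ term with $-(1+\tau)/x$, and get $\int_1^\infty\sqrt{(x^2-\beta_\tau^2)/(x^2-1)}\,x^{-2m-1}dx=c_{2m-1}$ directly by $x=1/u$. The paper instead splits $\omega_\tau=W_1+W_2$, a split it also uses for the continuity remark, and obtains terms $c_{2k-1}-\frac1{2k}$ and $\frac1{2k}$ that cancel. Your route is shorter, and your Tonelli argument supplies the interchange justification the paper omits. The constant you left unchecked is correct. Writing $\beta=\beta_\tau$, both $\int_1^\infty\frac1x(\sqrt{(x^2-\beta^2)/(x^2-1)}-1)\,dx$ and $\frac12\log(\frac{4}{1-\beta^2}(\frac{1-\beta}{1+\beta})^{\beta})$ have $\beta$-derivative $-\frac12\log\frac{1+\beta}{1-\beta}$ and equal $\log 2$ at $\beta=0$.

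Two steps in your last paragraph do not go through as written.

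(a) Carrying out the $c_1$ integral gives something different from the statement. With $u=\cos\theta$ it equals $\int_0^1\sqrt{1-\beta_\tau^2+\beta_\tau^2u^2}\,du$, so
\[
c_1=\frac12+\frac{1-\beta_\tau^2}{4\beta_\tau}\log\frac{1+\beta_\tau}{1-\beta_\tau},
\]
which has half the logarithmic term that is printed. The printed value cannot be right. Indeed $c_1\le\int_0^1 s(1-s^2)^{-1/2}ds=1$, whereas the printed expression tends to $3/2$ as $\beta_\tau\to0^+$. The first relation in \eqref{rec2} also forces $c_1\to1$. So ``direct integration'' proves a corrected formula, not the stated one; this is a misprint you would have caught by doing the computation.

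(b) Integrating the derivative of $s^{k-1}\sqrt{(1-s^2)(1-\beta_\tau^2s^2)}$ produces the term $-\beta_\tau^2\int_0^1 s^k\sqrt{(1-s^2)/(1-\beta_\tau^2s^2)}\,ds$. This is not a combination of the $c_j$, so your ``rewriting in terms of $c_{k+2},c_k,c_{k-2}$'' hides a missing step. The paper closes it as follows:
\begin{itemize}
\item it introduces $d_k=\int_0^1 s^k\,ds/\sqrt{(1-s^2)(1-\beta_\tau^2s^2)}$;
\item it uses $c_k=d_k-\beta_\tau^2d_{k+2}$ to get $(1-\beta_\tau^2)d_k=(k+1)c_k-(k-1)c_{k-2}$;
\item it applies this identity at $k$ and at $k+2$.
\end{itemize}
A one-step alternative is to differentiate $s^{k-1}(1-s^2)^{1/2}(1-\beta_\tau^2s^2)^{3/2}$ instead. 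Its derivative is
\[
\sqrt{\frac{1-\beta_\tau^2s^2}{1-s^2}}\,\bigl[(k-1)s^{k-2}-(k+(k+2)\beta_\tau^2)s^k+(k+3)\beta_\tau^2s^{k+2}\bigr],
\]
which yields \eqref{rec2} directly; for $k=1$ the boundary value $1$ at $s=0$ gives the constant term.

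The same issue affects $c_2$. Since $c_2=d_2-\beta_\tau^2d_4$, the reduction $d_2=(K-E)/\beta_\tau^2$ alone does not determine $c_2$. You need to combine it with $(1-\beta_\tau^2)d_2=3c_2-c_0$.
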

\begin{remark}
\begin{enumerate}
\item[\textit{(i)}] 
We will show below that the equilibrium constant $\omega_\tau$ is also continuous across $\tau=2/(\pi-2)$:
$$
\lim_{\tau \to (2/(\pi-2))^-} \omega_\tau = \lim_{\tau \to (2/(\pi-2))^+} \omega_\tau=\frac{\pi}{\pi-2}\log(2).
$$

\item[\textit{(ii)}] The three-term recurrence \eqref{rec2} can be obtained from the contiguous relation of the ${ }_2 F_1$ in \eqref{defCn} under $k \rightarrow k \pm 2$.
\end{enumerate}
\end{remark}
\begin{proof}
With $
f(z)=C^{\mu +\tau \lambda}(z)$, 
$$
V^\mu(z)+\tau V\lambda(z)=V^{\mu+\tau \lambda}(z)=V^{\mu+\tau \lambda}(1)-\Re \int_1^z f(x)\,dx.
$$
Since $1\in\supp(\mu_\tau)$, we have that $V^{\mu+\tau \lambda}(1)=\omega_\tau$. Additionally, $V^{\mu+\tau \lambda}$ has the property that
$$
V^{\mu+\tau \lambda}(z)=-(1+\tau) \log|z|+ \mathcal O\left(\frac{1}{z} \right), \quad z\to \infty.
$$
Thus,
\begin{align*}
 0&=   \lim_{z\to \infty} \left( V^{\mu+\tau \lambda}(z)+(1+\tau) \log|z|\right)= \lim_{z\to \infty} \left( \omega_\tau -\Re \int_1^z f(x)\,dx +(1+\tau) \log|z|\right) \\
 &= \lim_{z\to \infty} \left( \omega_\tau -\Re \int_1^z \left( f(x)-\frac{1+\tau}{x}\right)\,dx  \right),
\end{align*}
which gives us the identity
\begin{align*}
\omega_\tau&=  \int_1^{+\infty} \left( f(x)-\frac{1+\tau}{x}\right)\,dx.
\end{align*}
It remains to use the expression for $f$ in  \eqref{f_int_formula_tau>2/(pi-2)} to arrive at \eqref{equilibrium_constant_tau>2/(pi-2)}. 

To show that this is a converging improper integral, we use \eqref{supp_mu_tau>2/(pi-2)}:
\begin{equation*}
  \frac{\tau}{2}   \int_{-\beta_\tau}^{\beta_\tau}\sqrt{\frac{1-s^2}{\beta_\tau^2-s^2}}\, ds = \tau E(\beta_\tau)= 1+\tau.
\end{equation*}
Thus,
$$
\frac{\tau}{2}   \int_{-\beta_\tau}^{\beta_\tau}\sqrt{\frac{1-s^2}{\beta_\tau^2-s^2}}\frac{ds}{x-s} - \frac{1+\tau}{x}= \frac{\tau}{2x}   \int_{-\beta_\tau}^{\beta_\tau}\sqrt{\frac{1-s^2}{\beta_\tau^2-s^2}}\frac{s\ ds}{x-s},
$$
and we can write
$$
\omega_\tau  = W_1+W_2,
$$
where
\begin{align*}
W_1 & = \frac{\tau}{2}\int_1^{+\infty} \left(\sqrt{\frac{x^2-\beta_\tau^2}{x^2-1}} -1\right)\left( \int_{-\beta_\tau}^{\beta_\tau}\sqrt{\frac{1-s^2}{\beta_\tau^2-s^2}}\frac{ds}{x-s} \right) \,dx\\
&= \frac{\tau(1-\beta_\tau^2)}{2}\int_1^{+\infty} \frac{1}{\sqrt{ x^2-1} \left( \sqrt{ x^2-1}+\sqrt{ x^2-\beta_\tau^2}\right) } \left( \int_{-\beta_\tau}^{\beta_\tau}\sqrt{\frac{1-s^2}{\beta_\tau^2-s^2}}\frac{ds}{x-s} \right) \,dx
\end{align*}
and
\begin{align*}
W_2 & = \int_1^{+\infty} \left(\frac{\tau}{2}  \left( \int_{-\beta_\tau}^{\beta_\tau}\sqrt{\frac{1-s^2}{\beta_\tau^2-s^2}}\frac{ds}{x-s} \right)-\frac{1+\tau}{x}\right)\,dx = \frac{\tau }{2}\int_1^{+\infty}   \left( \int_{-\beta_\tau}^{\beta_\tau}\sqrt{\frac{1-s^2}{\beta_\tau^2-s^2}}\frac{s \, ds}{x-s} \right) \,\frac{dx}{x}.
\end{align*}
Both improper integrals are clearly converging. 

To study the behavior of $W_j$, $j=1,2$, as $\tau\to 2/(\pi-2)^+$, $\beta_\tau\to 0^+$, we can make a change of variables in the interior integrals and get alternative expressions: 
\begin{equation} \label{I1}
W_1   = \frac{\tau(1-\beta_\tau^2)}{2}\int_1^{+\infty} \frac{1}{\sqrt{ x^2-1} \left( \sqrt{ x^2-1}+\sqrt{ x^2-\beta_\tau^2}\right) } \left( \int_{-1}^{1}\sqrt{\frac{1-\beta_\tau^2 s^2}{1-s^2}}\frac{ds}{x-\beta_\tau s} \right) \,dx
\end{equation}
and
\begin{equation} \label{I2}
W_2   = \frac{\tau \beta_\tau }{2}\int_1^{+\infty}   \left( \int_{-1}^{1}\sqrt{\frac{1-\beta_\tau^2s^2}{1-s^2}}\frac{s\, ds}{x-\beta_\tau s} \right) \,\frac{dx}{x}.
\end{equation}
As $\tau\to 2/(\pi-2)^+$, $\beta_\tau\to 0^+$, and
\begin{align*}
\lim_{\tau\to 2/(\pi-2)^+} W_1  = & \frac{2 }{2(\pi-2)}\int_1^{+\infty} \frac{1}{x \sqrt{ x^2-1} \left(x+ \sqrt{ x^2-1}\right) } \left( \int_{-1}^{1}\frac{ds}{\sqrt{1-s^2}}   \right) \,dx \\
&= \frac{\pi}{\pi-2}\int_1^{+\infty} \frac{1}{x \sqrt{ x^2-1} \left(x+ \sqrt{ x^2-1}\right) }  \,dx  = \frac{\pi}{\pi-2}\, \log(2),
\end{align*}
while by \eqref{I2}, $\lim_{\tau\to 2/(\pi-2)^+} W_2= 0$. Combining it with \eqref{equilibrium_constant_-1<=tau<=2/(pi-2)} we conclude that $\omega_\tau$ is continuous at $\tau=2/(\pi-2)$.

Finally, we address a series representation of $\omega_\tau$ for $\tau>2/(\pi-2)$. Since for $x\ge 1$, $|\beta_\tau s|\le \beta_\tau<1$, we can expand $1/(x-\beta_\tau s)$ into geometric series, and the definition \eqref{defCn} yields  
\begin{equation}
    \label{seriesexp1}
\int_{-1}^{1}\sqrt{\frac{1-\beta_\tau^2s^2}{1-s^2}}\frac{ds}{x-\beta_\tau s} =\sum_{k=0}^{+\infty} \frac{\beta_\tau^k}{x^{k+1}}\int_{-1}^{1}\sqrt{\frac{1-\beta_\tau^2s^2}{1-s^2}}s^k \,ds  = 2 \sum_{k=0}^{+\infty} \frac{  c_{2k} \beta_\tau^{2k}}{x^{2k+1}}  
\end{equation}
and
\begin{equation}
    \label{seriesexp2}
 \int_{-1}^{1}\sqrt{\frac{1-\beta_\tau^2s^2}{1-s^2}}\frac{s\, ds}{x-\beta_\tau s} =\frac{1}{\beta_\tau^k}\left( \int_{-1}^{1}\sqrt{\frac{1-\beta_\tau^2s^2}{1-s^2}}\frac{x\, ds}{x-\beta_\tau s} -2 c_0 \right)  =2 \sum_{k=1}^{+\infty} \frac{  c_{2k} \beta_\tau^{2k-1}}{x^{2k}}  .
\end{equation}
Replacing \eqref{seriesexp1} into \eqref{I1}, we get
$$
W_1= \tau(1-\beta_\tau^2) \sum_{k=0}^{+\infty}    c_{2k} \beta_\tau^{2k} \int_1^{+\infty} \frac{1}{x^{2k+1}   \left(   x^2-1 +\sqrt{ ( x^2-1)(x^2-\beta_\tau^2})\right) }   \,dx .
$$
Notice that for $k\ge 1$,
\begin{align*}
& \int_1^{+\infty} \frac{1}{x^{2k+1}   \left(   x^2-1 +\sqrt{ ( x^2-1)(x^2-\beta_\tau^2})\right) }   \,dx \\
&= \frac{1}{1-\beta_\tau^2} \left( \int_1^{+\infty} \sqrt{\frac{x^2-\beta_\tau^2}{x^2-1}} \frac{   dx }{x^{2k+1}   }   \,dx   - \frac{  1}{2k } \right)\\
&= \frac{1}{1-\beta_\tau^2} \left( \int_0^1 \sqrt{\frac{1-\beta_\tau^2 s^2}{1-s^2}} s^{2k-1}ds   - \frac{  1}{2k } \right) = \frac{1}{1-\beta_\tau^2} \left( c_{2k-1}    - \frac{  1}{2k } \right),
\end{align*}
while
$$
\int_1^{+\infty} \frac{1}{x    \left(   x^2-1 +\sqrt{ ( x^2-1)(x^2-\beta_\tau^2})\right) }   \,dx = \frac{1}{2
   \left(1-\beta_\tau^2\right)} \log \left(\frac{4}{1-\beta_\tau^2}
   \left(\frac{1-\beta_\tau}{1+\beta_\tau}\right)^{\beta_\tau}\right). 
$$
thus,
$$
W_1= \frac{1+\tau}{2}    \log \left(\frac{4}{1-\beta_\tau^2}
   \left(\frac{1-\beta_\tau}{1+\beta_\tau}\right)^{\beta_\tau}\right)+ \tau  \sum_{k=1}^{+\infty}    c_{2k} \beta_\tau^{2k} \left( c_{2k-1}    - \frac{  1}{2k } \right).
$$

Similarly, using \eqref{seriesexp2}  in \eqref{I2} we get
$$
\quad W_2  =  \tau    \sum_{k=1}^{+\infty}  \frac{c_{2k}}{2k}  \, \beta_\tau^{2k}  .
$$
A combination of these two expressions yields
\begin{align*}
\omega_\tau & =  \frac{1+\tau}{2}    \log \left(\frac{4}{1-\beta_\tau^2}
   \left(\frac{1-\beta_\tau}{1+\beta_\tau}\right)^{\beta_\tau}\right)+ \tau  \sum_{k=1}^{+\infty}    c_{2k} \beta_\tau^{2k} \left( c_{2k-1}    - \frac{  1}{2k } \right)+ \tau    \sum_{k=1}^{+\infty}  \frac{c_{2k}}{2k}  \, \beta_\tau^{2k} ,
\end{align*}
that is, \eqref{powerseriesomega}.

Finally, define
$$
d_k:=\int_0^1 \frac{s^k \, ds}{\sqrt{(1-s^2)(1-\beta_\tau^2 s^2)}} \,d \theta.
$$ 
Then
\begin{equation}
    \label{identitycandJ}
c_k=d_k-\beta_\tau^2 d_{k+2} .
\end{equation}
Furthermore, for 
$$
h_k(s):= s^{k-1} (1-s^2)^{1/2}(1-\beta_\tau^2s^2)^{1/2}
$$ 
we have
$$
h_k'(s)= \sqrt{\frac{1-\beta_\tau^2 s^2}{1-s^2}}  \left[(k-1) s^{k-2} -k s^k \right] -\frac{\beta_\tau^2 s^{k}(1-s^2)}{\sqrt{(1-s^2)(1-\beta_\tau^2 s^2)}} .
$$
Since for $k \geq 2$, $h_k(0)= h_k(1)=0$, integrating $h_k'$ over $[0, 1]$ for $k\geq2$ gives
\begin{equation*}
0=(k-1) \, c_{k-2}-k \, c_k-\beta_\tau^2 d_k+\beta_\tau^2 d_{k+2} .
\end{equation*}
We can use \eqref{identitycandJ} to replace $\beta_\tau^2 d_{k+2}=d_k-c_k$:
$$
0=(k-1) \, c_{k-2}-(k+1) \, c_k+(1-\beta_\tau^2) d_k  .
$$
In particular, 
$$
(1-\beta_\tau^2) d_k=-(k-1) \, c_{k-2}+(k+1) \, c_k.
$$
Replacing this equality in \eqref{identitycandJ}, with $k$ and $k+2$, yields
$$
(1-\beta_\tau^2) c_k=-(k-1) \, c_{k-2}+(k+1) \, c_k-\beta_\tau^2 \left(-(k+1) \, c_{k}+(k+3) \, c_{k+2}\right) ,
$$
which is equivalent to recurrence \eqref{rec2} for $k\ge 2$.

On the other hand, for $k=1$, the boundary term $h_1(0)=1 \neq 0$ modifies the equation, and after integration we get
$$
4 \beta_\tau^2\, c_3=\left(1+3 \beta_\tau^2\right) c_1-1 ,
$$
as in \eqref{rec2}. 

Finally, the expressions for $c_1$ and $c_2$ are obtained by direct computation, while the second expression in \eqref{defCn} is a consequence of the Euler (or Gauss) integral representation of the hypergeometric function, see \cite[formula (15.6.1)]{DLMF}.
\end{proof}

\section*{Acknowledgments}

    The second author was partially supported by the Spanish project PID2021-124472NB-I00, funded by MICIU/AEI/10.13039/501100011033 and by ``ERDF A way of making Europe.'' He also acknowledges the support of Junta de Andaluc\'{\i}a (research group FQM-229 and Instituto Interuniversitario Carlos I de F\'{\i}sica Te\'orica y Computacional). 

    We also benefited from fruitful discussions with Guilherme Silva.


\def\cprime{$'$}

\end{document}